\documentclass[12pt,bezier]{article}
\usepackage{amssymb}
\usepackage{mathrsfs}
\usepackage{amsmath}
\usepackage{amsfonts,amsthm,amssymb}
\usepackage{amsfonts}
\usepackage{graphics}
\usepackage{cite}

\textheight=22cm \textwidth=16cm
\parskip = 0.2cm
\topmargin=0cm \oddsidemargin=0cm \evensidemargin=0cm
\newtheorem{lem}{Lemma}[section]
\newtheorem{thm}[lem]{Theorem}

\newtheorem{rem}[lem]{Remark}

\begin{document}

\title{Reliability evaluation of folded hypercubes in terms of component connectivity
%\footnote{The research is supported by  NSFC (No.11301371, 11301217).}
}
\author{ Shuli Zhao, \quad Weihua Yang\footnote{Corresponding author. E-mail: ywh222@163.com,~yangweihua@tyut.edu.cn}\\
\\ \small Department of Mathematics, Taiyuan University of Technology,\\
\small  Taiyuan Shanxi-030024,
China}
\date{}
\maketitle

{\small{\bf Abstract.}\quad The component connectivity is the generalization of connectivity which is an parameter for the reliability
evaluation of interconnection networks. The $g$-component connectivity $c\kappa_{g}(G)$ of a non-complete connected graph $G$ is the minimum
number of vertices whose deletion results in a graph with at least $g$ components. The results in [Component connectivity of the hypercubes, International Journal of Computer Mathematics 89 (2012)  137-145] by Hsu et al. determines the component connectivity of the hypercubes. As an invariant of the hypercube, we determine the $(g+1)$-component connectivity
of the folded hypercube $c\kappa_{g}(FQ_{n})=g(n+1)-\frac{1}{2}g(g+1)+1$ for $1\leq g \leq n+1, n\geq 8$ in this paper.
\vskip 0.5cm Keywords: Folded hypercubes; Component connectivity; Conditional connectivity ; Interconnection networks
\section{Introduction}
An interconnection network is usually modeled by a connected graph in which vertices represent processors and edges represent links between processors. The connectivity is one of the important parameters to evaluate the reliability and fault tolerance of a network. However, the traditional connectivity always underestimates the resilience of large networks. With the development of multiprocessor systems, improving the traditional connectivity is necessary. Motivated by the shortcomings of traditional connectivity, several generalized connectivity and edge connectivity were considered by many authors. The component connectivity and component edge connectivity were introduced in \cite{Chartrand} and \cite{Sampathkumar} independently. The component connectivity can more accurately evaluate the reliability and fault tolerance for large-scale parallel processing systems accordingly. In \cite{Hsu}, it studies the component connectivity of the hypercubes.

Let $G$ be a non-complete connected graph. The $g$-component cut of $G$ is a set of vertices whose deletion results in a graph with at least $g$ components. The $g$-component connectivity $c\kappa_{g}(G)$ of a graph $G$ is the size of the smallest  $g$-component cut of $G$. By the definition of $c\kappa_{g}(G)$, it can be seen that $ c\kappa_{g+1}(G)\geq c\kappa_{g}(G)$ for every positive integer $g$.

%\scalebox{0.9}{\includegraphics{fig1.eps}}
%Figure 1. The $3-$dimensional folded hypercube.

%\begin{center}
%\scalebox{0.9}{\includegraphics{fig2.eps}}
%Figure 2. The $4-$dimensional folded hypercube.
The $n$-dimensional hypercube $Q_{n}$ is an undirected graph $Q_{n}=(V,E)$ with $|V|=2^{n}$ and $|Q_{n}|=n2^{n-1}.$ Each vertex can be represented by an $n$-bit binary string, and every  bit position is $0$ or $1$. There is an edge between two vertices whenever there binary string representation differs in only one bit position.

As one of the important variants of the hypercube network, the $n$-dimensional folded hypercube $FQ_{n}$, proposed by El-Amawy \cite{El-Amawy}, is obtained from an $n$-dimensional hypercube $Q_{n}$ by adding an edge between any pair of vertices with complementary addresses. The folded hypercube $FQ_{n}$ is superior to $Q_{n}$ in some properties, see \cite{El-Amawy, Lai,Chang}. Thus the folded hypercube is an enhancement on the hypercube $Q_{n}$ and $FQ_{n}$ is obtained by adding a perfect matching $M$ on the hypercube, where $M=\{(u,\overline{u})|u\in V(Q_{n})\}$ and $\overline{u}$ represents the complement of the vertex $u,$ that is, all their binary strings are complement and $\overline{0}=1$ and $\overline{1}=0$. One can seen that $E(FQ_{n})=E(Q_{n})\bigcup M.$ For convenience, $FQ_{n}$ can be expressed as $D_{0}\bigotimes D_{1},$ where $D_{0}$ and $D_{1}$ are $(n-1)$-dimensional subcubes induced by the vertices with the $i$-th coordinate $0$ and $1$ respectively.  The 3-dimensional and 4-dimensional folded hypercubes are shown in the
following Figure 1 and Figure 2,  respectively.
\begin{center}
\scalebox{0.15}{\includegraphics{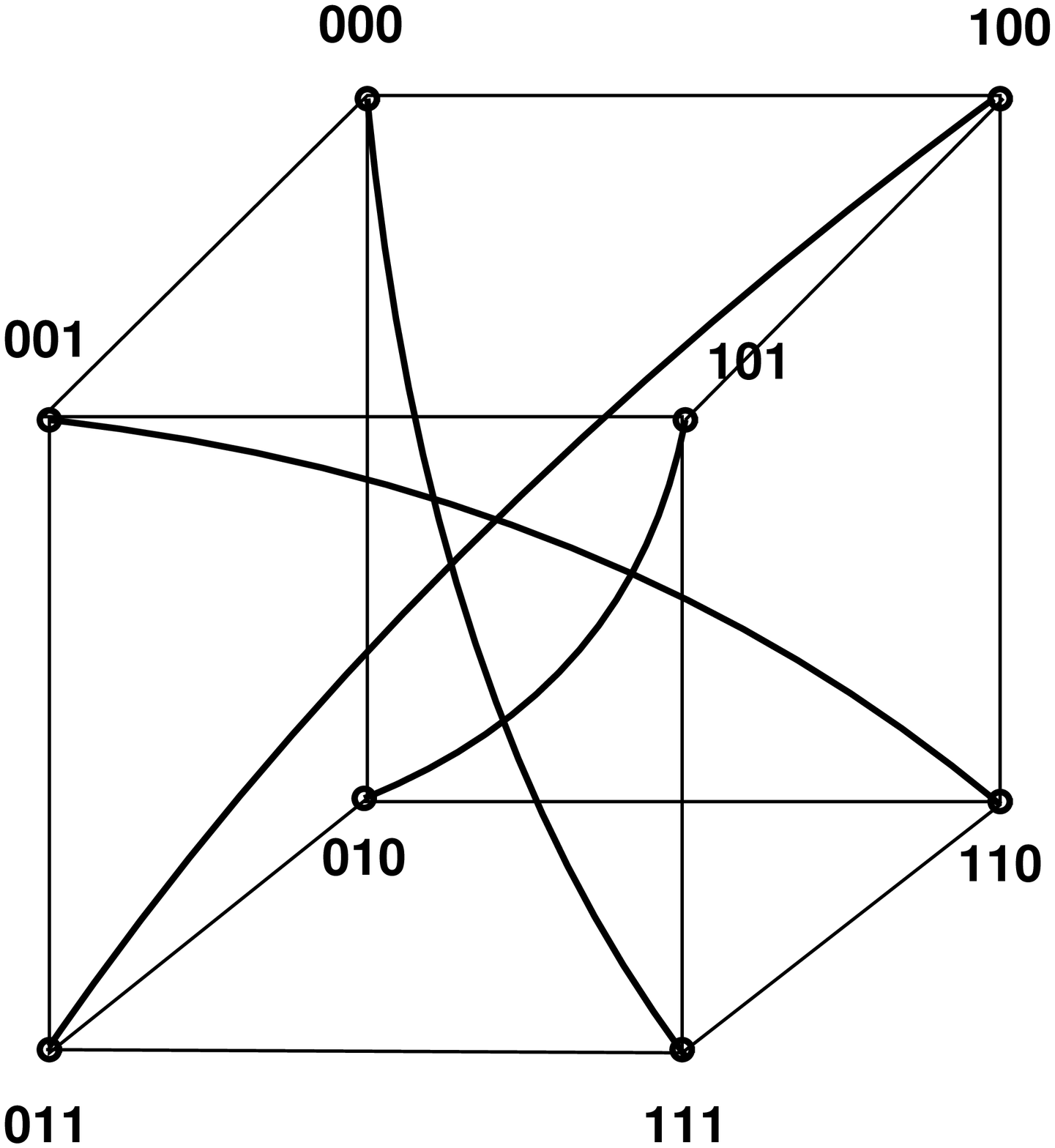}}\\
Figure 1. The 3-dimensional Folded hypercube.
\end{center}

\begin{center}
\scalebox{0.3}{\includegraphics{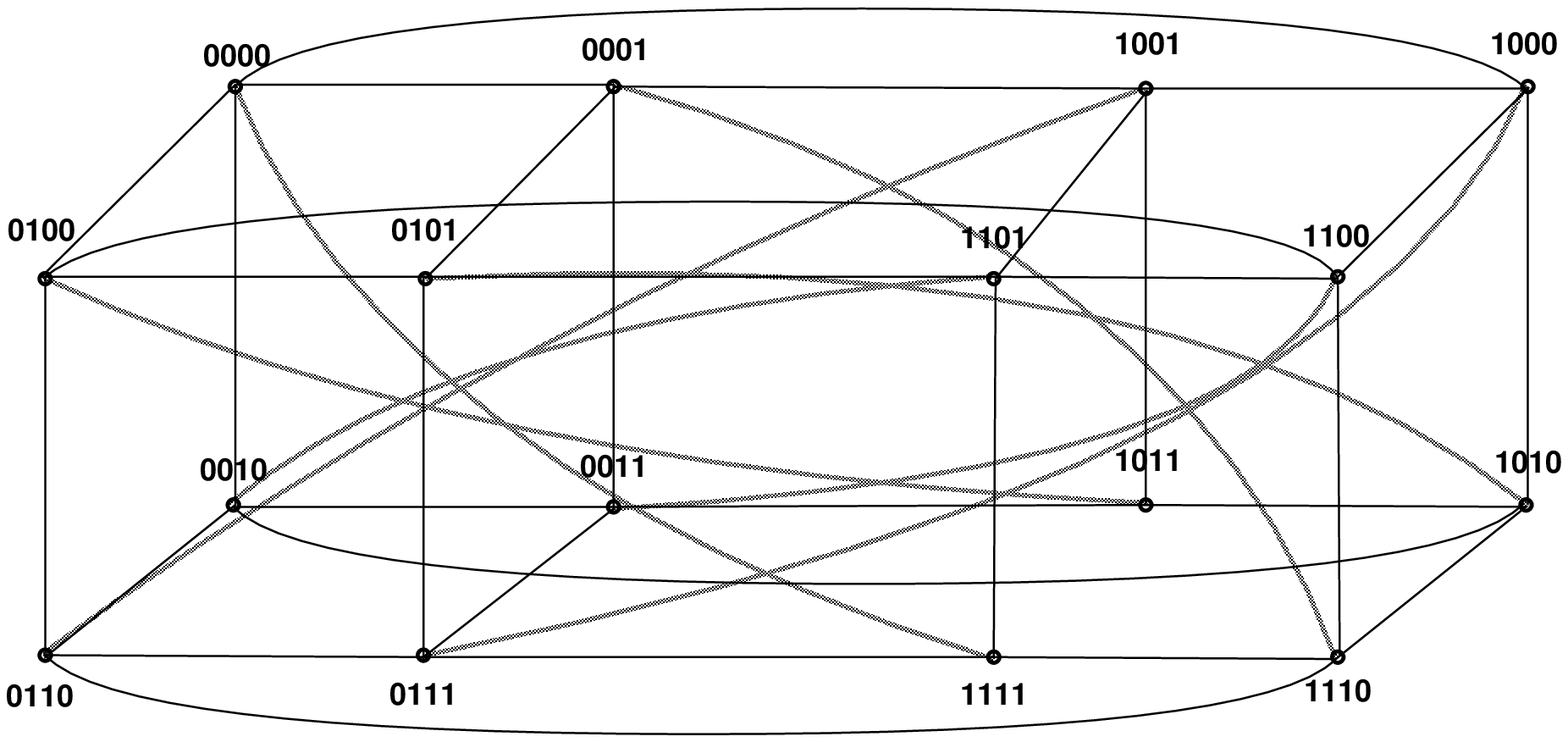}}\\
Figure 2. The 4-dimensional Folded hypercube.
\end{center}

Let $v$ be a vertex of a graph $G$, we use $ N_{G}(v)$ to denote the vertices that are adjacent to $ v $. Let $u,v \in V(G),$ $d(u,v)$ denotes the distance between $u$ and $v$. Let $ A\subseteq V(G),$ we denote by $ N_{G}(A)$ the vertex set $ \bigcup _{v \in V(A) }N_{G}(v)\setminus V(A)$  and $ C_{G}(A)=  N_{G}(A)\bigcup A.$ And also, we use $\theta_{G}(g)$ denotes the minimum number of vertices that are adjacent to a vertex set with $g$ vertices in $G$. For a vertex $v\in V(G)$, $d(v)$ denotes the degree of the vertex $v$ in $G$. The private neighbours of a vertex $v\in V^{'} \subseteq V(G),$ as in \cite{Somani}, denoted by $PN(v),$ are those neighbours of $v$ which are not shared by other vertices in $V^{'}$ and are not themselves in $V^{'}$, i.e., $PN(v)=N(v)-N(V^{'}-\{v\})-V^{'}.$ We follow Bondy \cite{Bondy} for terminologies not given here.

\section{Main results}

Before discussing $c\kappa_{g}(FQ_{n}),$ we need some results of hypercubes $Q_{n}$. Based on the results of  \cite{Somani, Yang2}, we have the following.

\begin{lem}[\cite{Somani, Yang2}]

$\theta_{Q_{n}}(g)= -\frac{1}{2}g^{2} +(n-\frac{1}{2})g+1$ for $1\leq g \leq n+1$,

$\theta_{Q_{n}}(g)= -\frac{1}{2}g^{2} +(2n-\frac{3}{2})g-n^{2}+2$ for $n+2\leq g \leq 2n.$

\end{lem}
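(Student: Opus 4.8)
The plan is to read the two displayed formulas as the two halves of a vertex-isoperimetric equality for $Q_n$: first construct explicit sets attaining the claimed value, which bounds $\theta_{Q_n}(g)$ from above, and then invoke Harper's vertex-isoperimetric inequality for the cube to see that no set does better, which bounds it from below. Throughout, identify $V(Q_n)$ with $\{0,1\}^n$, write $e_i$ for the $i$-th unit vector, $\mathbf 0$ for the zero vector, and recall that $\theta_{Q_n}(g)=\min\{|N_{Q_n}(A)|:A\subseteq V(Q_n),\ |A|=g\}$.

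For the upper bound I would treat the two ranges separately; the point is that the extremal configuration changes shape at $g=n+1$, from a truncated vertex-ball to a radius-$1$ ball with a star attached. For $1\le g\le n+1$ take $A=\{\mathbf 0,e_1,\dots,e_{g-1}\}\subseteq B(\mathbf 0,1)$: then $N_{Q_n}(A)$ consists of the $n-g+1$ unit vectors not in $A$ together with all weight-$2$ vectors meeting $\{1,\dots,g-1\}$, so
\[
\theta_{Q_n}(g)\le (n-g+1)+\binom n2-\binom{n-g+1}{2}=-\tfrac12g^2+(n-\tfrac12)g+1 .
\]
For $n+2\le g\le 2n$ take $A=B(\mathbf 0,1)\cup\{e_1+e_2,\dots,e_1+e_{g-n}\}$, i.e. $B(\mathbf 0,1)$ together with a "star" of the $g-n-1$ weight-$2$ vectors containing coordinate $1$ (a star, not a colex initial segment: stars minimise upper shadows, which is what the vertex isoperimetric problem wants, as opposed to the edge one); then $N_{Q_n}(A)$ is the $\binom n2-(g-n-1)$ remaining weight-$2$ vectors together with the weight-$3$ vectors that contain coordinate $1$ and at least one coordinate in $\{2,\dots,g-n\}$, whence
\[
\theta_{Q_n}(g)\le \binom n2-(g-n-1)+\binom{n-1}{2}-\binom{2n-g}{2}=-\tfrac12g^2+(2n-\tfrac32)g-n^2+2 .
\]
The right-hand equalities are routine binomial simplifications, and one checks that the two expressions coincide at the breakpoint $g=n+1$.

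For the matching lower bound I would appeal to Harper's vertex-isoperimetric theorem: among all $A$ with $|A|=g$, the quantity $|C_{Q_n}(A)|=|A|+|N_{Q_n}(A)|$, hence $|N_{Q_n}(A)|$ since $|A|$ is fixed, is minimised by an initial segment of the simplicial order on $\{0,1\}^n$ --- vertices ordered by Hamming weight, with ties inside a weight level broken by the order that lists stars first (for the sizes at hand only weight $\le 2$ is relevant). For $g\le n+1$ that initial segment is precisely the set $A$ of the previous paragraph; for $n+2\le g\le 2n$ it is $B(\mathbf 0,1)$ together with the first $g-n-1$ weight-$2$ vectors in star order, a set with the same neighbourhood size as our $A$. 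Evaluating $|N_{Q_n}(\cdot)|$ on this initial segment therefore reproduces exactly the two bounds above, and combining the two directions yields the stated equalities. (If a self-contained argument is wanted, replace the appeal to Harper by the standard compression proof, which reduces an arbitrary $A$ to such an initial segment without increasing $|C_{Q_n}(A)|$; that is considerably more work.) The lower bound is the real obstacle: identifying the extremal configuration on the second range and verifying that the phase transition between the two formulas genuinely occurs at $g=n+1$ is the delicate point, whereas the constructions and the arithmetic are mechanical.
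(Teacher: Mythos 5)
Your proof is correct --- I checked both constructions, both binomial simplifications, and the agreement of the two formulas at $g=n+1$ --- but it takes a genuinely different route from the one behind this lemma. The paper itself does not prove the statement: it quotes it from \cite{Somani,Yang2}, and those sources (like the paper's own Theorem 2.10, which establishes the analogue for $FQ_{n}$) argue by induction on $g$. There the upper bound comes from essentially your first set (a vertex together with $g-1$ of its neighbours), and the lower bound comes from a private-neighbour count in the style of Lemma 2.9: in any $g$-subset some vertex has enough private neighbours that deleting it from a hypothetical counterexample of size $g$ yields a counterexample of size $g-1$. Your route replaces that induction by Harper's vertex-isoperimetric theorem. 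What Harper buys you is a uniform identification of the extremal configurations on both ranges at once --- in particular the ball-plus-star shape for $n+2\leq g\leq 2n$, where the inductive bookkeeping becomes delicate --- at the price of importing a substantial black box, and of the one genuinely fragile point in your write-up: you must take the version of the simplicial order whose weight-$2$ initial segments are stars (lex-type tie-breaking within levels), since stars are what minimise upper shadows; with the colex tie-breaking one reflexively writes down for Kruskal--Katona, the initial segment would not match your construction and the evaluation would come out larger. You flag this correctly, but it deserves an explicit verification rather than a parenthetical. The elementary private-neighbour induction, by contrast, is self-contained and transfers to $FQ_{n}$, where no Harper-type theorem is available --- which is exactly why the paper follows that template in Lemma 2.9 and Theorem 2.10 rather than an isoperimetric one.
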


\begin{lem}[\cite{zhao, Hsu}]
\[c\kappa_{g+1}(Q_{n})=\left\{
\begin{array}{ll}
          -\frac{1}{2}g^{2} +(n-\frac{1}{2})g+1,\hspace{2.8cm} 1\leq g \leq n, n\geq3 \\
-\frac{g^{2}}{2}+(2n-\frac{5}{2})g-n^{2}+2n+1, \hspace{1cm} n+1\leq g \leq 2n-5, n\geq6 \\
\end{array}
\right.\]
\end{lem}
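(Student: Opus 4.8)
The plan is to prove matching upper and lower bounds, both anchored on the vertex-isoperimetric function $\theta_{Q_n}(g)$ supplied by Lemma 2.1. The key observation driving everything is that the two branches of the target formula are governed by the two branches of $\theta_{Q_n}$, together with the structural principle that a minimum $(g+1)$-component cut should leave one huge component and $g$ cheap small ones.

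For the upper bound I would exhibit an explicit $(g+1)$-component cut of the stated size. In the range $1\le g\le n$ I take $A=\{e_1,\dots,e_g\}$, the $g$ weight-one vertices; these are pairwise non-adjacent, and a direct count gives $|N_{Q_n}(A)|=1+\binom{g}{2}+g(n-g)=-\tfrac12 g^2+(n-\tfrac12)g+1=\theta_{Q_n}(g)$. Deleting $N_{Q_n}(A)$ isolates each $e_i$ and, since $n\ge 3$, leaves the remaining vertices $V(Q_n)\setminus(A\cup N_{Q_n}(A))$ in a single component, producing exactly $g+1$ components; hence $c\kappa_{g+1}(Q_n)\le\theta_{Q_n}(g)$. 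For the second range $n+1\le g\le 2n-5$ the $n$ weight-one vertices no longer suffice, so I would replace the singletons by the cheapest family of $g$ mutually non-adjacent small pieces whose joint neighbourhood realises the second-branch value; verifying that this family attains $-\tfrac{g^2}{2}+(2n-\tfrac52)g-n^2+2n+1$ and still leaves the rest connected is a bookkeeping step.

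The core is the lower bound. Suppose $F$ is a $(g+1)$-component cut, let $C_0$ be a largest component, and put $A=V(Q_n)\setminus(F\cup C_0)$, so that $A$ induces at least $g$ components and $N_{Q_n}(A)\subseteq F$. The task then reduces to a ``component isoperimetric'' estimate: any $A$ inducing at least $g$ components in $Q_n$ satisfies $|N_{Q_n}(A)|\ge$ the claimed value. I would prove this by an exchange/averaging argument showing that, among all such $A$, the neighbourhood is minimised when the small components are as degenerate as possible (isolated vertices in the first range, the extremal small pieces of the construction in the second), which reduces the optimum to $\theta_{Q_n}$ via Lemma 2.1 and makes the two branches of $\theta_{Q_n}$ produce precisely the two cases of the statement.

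The main obstacle is exactly this lower-bound optimisation. Because $\theta_{Q_n}(g)$ is not monotone across the whole range, I cannot simply write $|N(A)|\ge\theta_{Q_n}(|A|)$ once the small components are large; I must first establish a structural lemma that, as $|F|$ is small, $C_0$ contains almost all vertices and every other component is tiny, and then show that neither enlarging nor further fragmenting these tiny components ever decreases the total neighbourhood. Pinning down the precise extremal configuration near the transition $g\approx n$, where the first and second formulas meet, is the delicate point, and it is where the hypotheses $n\ge 3$ and $n\ge 6$ (the latter needed merely for $2n-5\ge n+1$ to leave the second range non-empty) are consumed.
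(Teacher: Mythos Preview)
The paper does not prove this lemma at all: it is quoted verbatim as a known result from \cite{zhao,Hsu}, with no accompanying argument. There is therefore no ``paper's own proof'' to compare against.

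As a standalone sketch, your outline for the range $1\le g\le n$ is essentially the argument of Hsu et al.\ \cite{Hsu}: the upper bound via the neighbourhood of $g$ weight-one vertices, and the lower bound via a large-component structural lemma (of the type recorded here as Lemma~2.3) combined with $\theta_{Q_n}$. That part is fine.

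For the second range $n+1\le g\le 2n-5$, however, you have not actually specified a construction. Saying ``the cheapest family of $g$ mutually non-adjacent small pieces whose joint neighbourhood realises the second-branch value'' is a description of the answer, not a construction; the content of \cite{zhao} lies precisely in identifying what those pieces are (one uses an edge together with $g-2$ carefully placed singletons, not $g$ singletons, since only $n$ pairwise non-adjacent weight-one vertices exist) and verifying the count. Likewise, your lower-bound ``exchange/averaging argument'' in this range is where all the difficulty sits: the transition near $g=n$ is exactly where one needs the refined structural information in Lemma~2.3(ii)--(iii), and you have not indicated how you would rule out configurations with, say, one component of size two among the small pieces without simply reproving those results. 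So while the shape of your plan matches the literature, the second branch is at present a statement of intent rather than a proof.
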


\begin{lem}[\cite{Yang3}] Let $n\geq 4$ and $F\subseteq V(Q_{n}).$ Then the following holds.

$(i)$ If $|F|< \theta_{Q_{n}}(g)$ and $1\leq g \leq n-3,$ then $Q_{n}-F$ contains exactly one large component of order at least $2^{n}-|F|-(g-1).$

$(ii)$ If $|F|< \theta_{Q_{n}}(g)$ and $n-2\leq g \leq n+1,$ then $Q_{n}-F$ contains exactly one large component of order at least $2^{n}-|F|-(n+1).$

$(iii)$If $|F|< \theta_{Q_{n}}(g)$ and $n+2\leq g \leq 2n-4,$ then $Q_{n}-F$ contains exactly one large component of order at least $2^{n}-|F|-(g-1).$
\end{lem}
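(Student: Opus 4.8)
The plan is to recast the three assertions as a single bound on the size of the ``defect'' of $Q_{n}-F$ and to reduce that bound to an isoperimetric inequality for $Q_{n}$. Fix $F$ with $|F|<\theta_{Q_{n}}(g)$, let $L$ be a component of $Q_{n}-F$ of maximum order, and put $W=V(Q_{n})\setminus(F\cup V(L))$, the union of the remaining components. A vertex of $W$ cannot be $Q_{n}$-adjacent to a vertex of $L$, so $N_{Q_{n}}(W)\subseteq F$ and therefore
\[
|F|\ \ge\ |N_{Q_{n}}(W)|\ \ge\ \theta_{Q_{n}}(|W|),
\]
the last step being the definition of $\theta_{Q_{n}}$. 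Since $|V(L)|=2^{n}-|F|-|W|$, assertion (i) (and (iii)) is equivalent to $|W|\le g-1$ and (ii) to $|W|\le n+1$; moreover, once the defect is that small, $|V(L)|$ exceeds $|W|$ for $n\ge 4$, so $L$ is automatically the unique component of the asserted order. Thus it suffices to bound $|W|$.

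The crux is the inequality $\theta_{Q_{n}}(m)\ge\theta_{Q_{n}}(g)$ for every $m$ that could occur as $|W|$: if $|W|=m$ were at least $g$ (cases (i),(iii)) or at least $n+2$ (case (ii)), the displayed inequality would give $|F|\ge\theta_{Q_{n}}(g)$, contradicting $|F|<\theta_{Q_{n}}(g)$. For $g\le m\le 2n$ one checks $\theta_{Q_{n}}(m)\ge\theta_{Q_{n}}(g)$ directly from the piecewise-quadratic formula of Lemma~2.1: $m\mapsto\theta_{Q_{n}}(m)$ is increasing on $\{1,\dots,n-1\}$, satisfies $\theta_{Q_{n}}(n-1)=\theta_{Q_{n}}(n)>\theta_{Q_{n}}(n+1)=\theta_{Q_{n}}(n-2)$, and is increasing again on $\{n+2,\dots,2n-2\}$; comparing the three ranges of $g$ with this profile yields the inequality, and it is exactly the local maximum of $\theta_{Q_{n}}$ at $m\in\{n-1,n\}$ together with the coincidence $\theta_{Q_{n}}(n+1)=\theta_{Q_{n}}(n-2)$ that force the split into three cases and the coarser remainder bound $n+1$ in (ii). For $m>2n$ one appeals to Harper's vertex-isoperimetric theorem for $Q_{n}$: $\theta_{Q_{n}}(m)$ is the vertex-boundary of an initial segment of the simplicial order, its only local minima on $n+1\le m\le 2^{n-1}$ are the ``level-completion'' values $\binom{n}{2},\binom{n}{3},\binom{n}{4},\dots$, and for $n\ge 8$ each of these (and hence every value of $\theta_{Q_{n}}$ on that range) exceeds $\theta_{Q_{n}}(g)$ in the corresponding regime. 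The finitely many small cases $4\le n\le 7$ are disposed of by explicit computation.

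One point still needs attention: $\theta_{Q_{n}}(m)$ decreases back towards $0$ as $m\to 2^{n}$, so a huge defect must be excluded separately. Suppose $|W|\ge 2^{n-1}$; then $|V(L)|=2^{n}-|F|-|W|<2^{n-1}$, and $N_{Q_{n}}(V(L))\subseteq F$ gives $|F|\ge\theta_{Q_{n}}(|V(L)|)$. If $|V(L)|\ge 2n-4$ this already contradicts $|F|<\theta_{Q_{n}}(g)$, by the monotonicity just used and $g\le 2n-4$. Otherwise $L$ -- hence every component of $Q_{n}-F$ -- has fewer than $2n-4$ vertices, so $W$ is a disjoint union of many such tiny components; bundling them greedily yields a union $B\subseteq W$ with $2^{n-2}\le|B|<2^{n-1}$, and then $N_{Q_{n}}(B)\subseteq F$ together with Harper's theorem forces $|F|\ge\theta_{Q_{n}}(|B|)$, a quantity of order $2^{n}/\sqrt{n}$ for $n\ge 8$, again contradicting $|F|<\theta_{Q_{n}}(g)$. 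Hence $|W|<2^{n-1}$, the previous paragraph applies, and the lemma follows.

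The main obstacle is the bookkeeping of the second step: aligning the three ranges of $g$ with the non-monotone profile of $m\mapsto\theta_{Q_{n}}(m)$ near its local maximum at $m\approx n$ and near the breakpoints $g=n-3,n-2,n+1,n+2$, and making the ``no huge defect'' estimate fully quantitative for the smaller admissible $n$. A self-contained alternative -- presumably the route of \cite{Yang3} -- is an induction on $n$ using $Q_{n}=Q_{n-1}^{0}\sqcup Q_{n-1}^{1}$ together with the perfect matching joining the two halves: write $F=F_{0}\cup F_{1}$ with $|F_{0}|\le|F_{1}|$, apply the induction hypothesis to each $Q_{n-1}^{i}-F_{i}$ with $g_{i}$ minimal such that $|F_{i}|<\theta_{Q_{n-1}}(g_{i})$, merge the two large components across the at least $2^{n-1}-|F|$ surviving matching edges, and add the two small remainders; here the delicate case is a ``concentrated'' fault set ($|F_{1}|\approx|F|$, $|F_{0}|$ tiny), for which one instead notes that $Q_{n-1}^{0}-F_{0}$ is essentially intact and connected and, through the matching, absorbs all but at most $|F_{0}|+O(1)$ vertices of the other half.
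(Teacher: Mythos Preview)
The paper does not prove this lemma; it is quoted verbatim from \cite{Yang3} and used as a black box. There is therefore no proof in the present paper to compare your attempt against.

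On the substance of your sketch: the reduction to bounding $|W|$ via $|F|\ge\theta_{Q_n}(|W|)$ is correct, and your case analysis of $\theta_{Q_n}(m)$ for $m\le 2n$ using the piecewise formula of Lemma~2.1 is accurate (the identities $\theta_{Q_n}(n-1)=\theta_{Q_n}(n)$, $\theta_{Q_n}(n-2)=\theta_{Q_n}(n+1)$, and $\theta_{Q_n}(n+2)\ge\theta_{Q_n}(n)$ for $n\ge 4$ all check). The genuine gap is the range $|W|>2n$. Your appeal to Harper's theorem is the right instinct, but the description of the isoperimetric profile is not correct as stated: between level-completion points $|B(r)|$ and $|B(r+1)|$ the function $m\mapsto\theta_{Q_n}(m)$ is not simply ``down to a minimum at the next completion value''; it equals $(|B(r+1)|-m)$ plus an upper-shadow term governed by Kruskal--Katona, and pinning down its minima and comparing them to $\theta_{Q_n}(g)$ for all $g\le 2n-4$ and all $n\ge 4$ is real work that you have not done. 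Deferring $4\le n\le 7$ to ``explicit computation'' is also not viable here, since one would have to rule out every $m$ up to $2^{n-1}$. The ``huge defect'' paragraph has the same issue: the bundling gives $N_{Q_n}(B)\subseteq F$, but you still need a hard lower bound on $\theta_{Q_n}(|B|)$ for $|B|\in[2^{n-2},2^{n-1})$, and ``of order $2^n/\sqrt{n}$'' is not a proof.

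Your closing remark is the right diagnosis: the argument in \cite{Yang3} proceeds by induction on $n$ via the decomposition $Q_n=Q_{n-1}^0\,\square\,Q_{n-1}^1$, which sidesteps the need for the full isoperimetric profile and handles all $n\ge 4$ uniformly. If you want a self-contained proof, that is the route to flesh out; the direct isoperimetric approach can be made to work but requires substantially more care than your sketch provides.
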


To determine $c\kappa_{g}(FQ_{n}),$ we need several properties of folded hypercubes.

\begin{lem}[\cite{Zhu}] Any two vertices in $V(FQ_{n})$ exactly have two common neighbours for $n\geq 4$ if they have.

\end{lem}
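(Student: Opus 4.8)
The plan is to read off the common neighbours directly from the description $FQ_n=Q_n\cup M$. For a vertex $u$ write $u^{(i)}$ for the vertex obtained from $u$ by flipping its $i$-th coordinate, and write $d_H(x,y)$ for the Hamming distance (the number of coordinates in which $x$ and $y$ differ); then $N_{FQ_n}(u)=\{u^{(i)}:1\le i\le n\}\cup\{\overline u\}$, the first $n$ vertices coming from $Q_n$-edges and the last from the matching $M$. Fix two distinct vertices $u,v$ and set $d=d_H(u,v)$. A common neighbour $w$ is adjacent to $u$ either through a hypercube edge or through $M$, and likewise for $v$, which splits the possible common neighbours into four types: $w=u^{(i)}=v^{(j)}$; $w=u^{(i)}=\overline v$; $w=\overline u=v^{(j)}$; $w=\overline u=\overline v$. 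The idea is to determine for which values of $d$ each type can occur and how many common neighbours it contributes.

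For the type $w=u^{(i)}=v^{(j)}$ one has $i\ne j$ (otherwise $u=v$) and then $u,v$ differ in exactly the two coordinates $i,j$, so this type occurs precisely when $d=2$, contributing exactly the two vertices obtained from $u$ by flipping one of the two differing coordinates. The mixed types $w=u^{(i)}=\overline v$ and $w=\overline u=v^{(j)}$ force $v=\overline{u^{(i)}}$, resp.\ $u=\overline{v^{(j)}}$, hence $d=n-1$ in either case; conversely, if $d=n-1$ then $d_H(\overline u,v)=n-d=1$, so there is a unique coordinate $j$ with $\overline u=v^{(j)}$ and, symmetrically, a unique $i$ with $\overline v=u^{(i)}$, giving exactly one common neighbour from each mixed type, namely $\overline u$ and $\overline v$; these are distinct since $u\ne v$. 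Finally $w=\overline u=\overline v$ is impossible as it forces $u=v$.

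Putting the cases together: if $u$ and $v$ have a common neighbour then $d\in\{2,\,n-1\}$, and in the case $d=2$ they have exactly two common neighbours (both via $Q_n$-edges), while in the case $d=n-1$ they again have exactly two (the vertices $\overline u,\overline v$, one from each mixed type). The hypothesis $n\ge 4$ enters exactly once, to guarantee $2\ne n-1$, so that the two scenarios are mutually exclusive and no further common neighbours can arise; this is precisely what fails for $n=3$. The only mildly delicate point is the case $d=n-1$, where one must check that the coordinates realising the matching-type common neighbours are well-defined and that $\overline u\ne\overline v$, both of which follow at once from $d_H(\overline u,v)=1$ and $u\ne v$. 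I do not anticipate any real obstacle beyond organising this case analysis carefully.
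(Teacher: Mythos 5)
Your argument is correct and complete: the four-way split of a common neighbour $w$ according to whether it reaches $u$ and $v$ by a hypercube edge or a complementary edge is exactly the right decomposition, and you correctly isolate the two scenarios $d_H(u,v)=2$ (two $Q_n$-type common neighbours) and $d_H(u,v)=n-1$ (the two common neighbours $\overline{u}$ and $\overline{v}$), with $n\geq 4$ used precisely to keep these cases disjoint — this is what fails in $FQ_3\cong K_{4,4}$. The paper itself gives no proof of this lemma (it is quoted from the reference of Zhu et al.), so there is nothing to compare against; your proof is the standard direct verification and would serve as a self-contained substitute for the citation.
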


\begin{lem}\label{2.4} For any two vertices $v_{i}, v_{j}\in V(FQ_{n}), n\geq 4$, the following holds.

$(i)$ If $ d(v_{i}, v_{j})\neq 2,$ then these two vertices do not have any common neighbour.

$(ii)$ If $ d(v_{i}, v_{j})= 2,$ then these two vertices have exactly two common neighbours.
\end{lem}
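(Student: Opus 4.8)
\quad The plan is to re-express adjacency in $FQ_{n}$ entirely through Hamming distance and then run a short case analysis. Throughout, write $d_{Q_{n}}(\cdot,\cdot)$ for the Hamming distance, which coincides with the graph distance in $Q_{n}$. Since $E(FQ_{n})=E(Q_{n})\cup M$ with $M=\{(w,\overline{w})\mid w\in V(Q_{n})\}$, a vertex $w$ is a neighbour of $u$ in $FQ_{n}$ if and only if $d_{Q_{n}}(w,u)=1$ (a hypercube edge) or $d_{Q_{n}}(w,u)=n$ (the complementary edge, i.e. $w=\overline{u}$). Hence $w$ is a common neighbour of $v_{i}$ and $v_{j}$ precisely when $d_{Q_{n}}(w,v_{i})\in\{1,n\}$ and $d_{Q_{n}}(w,v_{j})\in\{1,n\}$, which leaves exactly four combinations to inspect.

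First I would dispose of the combinations that involve the value $n$. If $d_{Q_{n}}(w,v_{i})=d_{Q_{n}}(w,v_{j})=n$ then $w=\overline{v_{i}}=\overline{v_{j}}$, forcing $v_{i}=v_{j}$, which is excluded. If $d_{Q_{n}}(w,v_{i})=1$ and $d_{Q_{n}}(w,v_{j})=n$ then $w=\overline{v_{j}}$; since $\overline{v_{j}}$ agrees with $v_{i}$ exactly in the coordinates where $v_{i}$ and $v_{j}$ differ, one gets $d_{Q_{n}}(\overline{v_{j}},v_{i})=n-d_{Q_{n}}(v_{i},v_{j})$, so this case occurs iff $d_{Q_{n}}(v_{i},v_{j})=n-1$, and then the unique candidate is $w=\overline{v_{j}}$. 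The symmetric combination gives the unique candidate $w=\overline{v_{i}}$, again only when $d_{Q_{n}}(v_{i},v_{j})=n-1$. For the remaining ``pure hypercube'' combination $d_{Q_{n}}(w,v_{i})=d_{Q_{n}}(w,v_{j})=1$: since $Q_{n}$ is bipartite, $d_{Q_{n}}(v_{i},v_{j})+d_{Q_{n}}(v_{j},w)+d_{Q_{n}}(w,v_{i})$ is even, so $d_{Q_{n}}(v_{i},v_{j})$ is even; together with the triangle inequality $d_{Q_{n}}(v_{i},v_{j})\leq 2$ and $v_{i}\neq v_{j}$ this forces $d_{Q_{n}}(v_{i},v_{j})=2$, and then there are exactly two such $w$, namely the two common neighbours of $v_{i}$ and $v_{j}$ inside $Q_{n}$.

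Next I would convert the surviving conditions into the distance $d_{FQ_{n}}$ that appears in the statement, via the standard identity $d_{FQ_{n}}(v_{i},v_{j})=\min\{d_{Q_{n}}(v_{i},v_{j}),\,n+1-d_{Q_{n}}(v_{i},v_{j})\}$. For $n\geq 4$ one checks that $d_{FQ_{n}}(v_{i},v_{j})=2$ holds if and only if $d_{Q_{n}}(v_{i},v_{j})=2$ or $d_{Q_{n}}(v_{i},v_{j})=n-1$, and that these two possibilities are mutually exclusive. Feeding this back into the four-case analysis yields the lemma: if $d(v_{i},v_{j})\neq 2$ then $d_{Q_{n}}(v_{i},v_{j})\notin\{2,n-1\}$ and none of the four combinations is realizable, which is $(i)$; if $d(v_{i},v_{j})=2$ then exactly one of $d_{Q_{n}}(v_{i},v_{j})=2$ and $d_{Q_{n}}(v_{i},v_{j})=n-1$ holds, and in either case exactly two common neighbours occur --- the two common neighbours of $v_{i},v_{j}$ in $Q_{n}$ in the first case, and $\overline{v_{i}},\overline{v_{j}}$ in the second --- which is $(ii)$.

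The one point that needs genuine care is the bookkeeping when $d_{Q_{n}}(v_{i},v_{j})=n-1$: one must verify that $\overline{v_{i}}\neq\overline{v_{j}}$, that each of them is indeed adjacent to both $v_{i}$ and $v_{j}$ (one adjacency via a hypercube edge, the other via the matching edge), and that neither is simultaneously a ``pure hypercube'' solution --- it is not, since $d_{Q_{n}}(\overline{v_{j}},v_{j})=n\neq 1$ --- so that the count is exactly $2$ and not more. This is also where $n\geq 4$ is essential: it rules out the coincidences $2=n-1$ and $1=n$, and in fact for $n=3$ (where $FQ_{3}\cong K_{4,4}$) two vertices at distance $2$ have four common neighbours, so the hypothesis cannot be dropped. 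Everything else is the elementary Hamming-distance arithmetic sketched above, and $(ii)$ is consistent with, and sharpens, the cited result of Zhu \cite{Zhu}.
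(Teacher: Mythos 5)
Your argument is correct and complete. The paper itself gives no real proof of this lemma --- it just says ``By the definition, the result holds'' --- so your Hamming-distance case analysis supplies exactly the content that is being elided: classify a common neighbour $w$ by whether each of its two adjacencies is a hypercube edge or the complementary edge, discard the double-complement case, and match the two surviving possibilities ($d_{Q_{n}}(v_{i},v_{j})=2$, yielding the two common neighbours inside $Q_{n}$, and $d_{Q_{n}}(v_{i},v_{j})=n-1$, yielding $\overline{v_{i}}$ and $\overline{v_{j}}$) against the identity $d_{FQ_{n}}=\min\{d_{Q_{n}},\,n+1-d_{Q_{n}}\}$. You also correctly locate where the hypothesis $n\geq 4$ is used (it keeps the values $2$ and $n-1$ apart, and $FQ_{3}\cong K_{4,4}$ is a genuine counterexample to $(ii)$), which the paper never makes explicit. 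The only ingredient you invoke without proof is the distance formula for $FQ_{n}$; that is standard and follows from the observation that a shortest path uses at most one complementary edge, so it is a reasonable thing to cite.
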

\begin{proof} By the definition, the result holds.
\end{proof}
\begin{lem} Let $FQ_{n}$ be a folded hypercube of dimension $n,$ $n\geq 4,$ and $V'\subset V$ with $|V'|=2.$ Then $|N_{FQ_{n}}(V')|\geq 2n.$
\end{lem}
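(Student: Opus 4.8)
The plan is to exploit the $(n+1)$-regularity of $FQ_{n}$ together with Lemma~\ref{2.4}, which controls how many common neighbours a pair of vertices can share. Write $V'=\{v_{i},v_{j}\}$ and recall that $N_{FQ_{n}}(V')=\bigl(N_{FQ_{n}}(v_{i})\cup N_{FQ_{n}}(v_{j})\bigr)\setminus V'$. Since $FQ_{n}$ is $(n+1)$-regular, inclusion--exclusion gives $|N_{FQ_{n}}(v_{i})\cup N_{FQ_{n}}(v_{j})|=2(n+1)-|N_{FQ_{n}}(v_{i})\cap N_{FQ_{n}}(v_{j})|$, so the whole problem reduces to bounding the number of common neighbours and then accounting for how many vertices of $V'$ lie inside that union.

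First I would split on the distance $d(v_{i},v_{j})$. If $d(v_{i},v_{j})=2$, then by Lemma~\ref{2.4}$(ii)$ the two vertices have exactly two common neighbours, so the union has size $2(n+1)-2=2n$; moreover $v_{i}$ and $v_{j}$ are non-adjacent, hence neither lies in $N_{FQ_{n}}(v_{i})\cup N_{FQ_{n}}(v_{j})$, and therefore $|N_{FQ_{n}}(V')|=2n$. If $d(v_{i},v_{j})\neq 2$, then by Lemma~\ref{2.4}$(i)$ the two vertices have no common neighbour, so the union has size $2(n+1)$; passing to $N_{FQ_{n}}(V')$ removes at most the two vertices $v_{i},v_{j}$ themselves, giving $|N_{FQ_{n}}(V')|\geq 2(n+1)-2=2n$. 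Combining the two cases yields the claim.

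I do not expect any genuine obstacle here: the statement is an immediate consequence of regularity and the common-neighbour dichotomy already established in Lemma~\ref{2.4}. The only point requiring a moment's care is the bookkeeping of which elements of $V'$ must be subtracted from $N_{FQ_{n}}(v_{i})\cup N_{FQ_{n}}(v_{j})$, which depends on whether $v_{i}$ and $v_{j}$ are adjacent; but in every subcase the loss is at most $2$, exactly absorbed by the surplus $2(n+1)-2n$ present in the non-distance-$2$ case and not even incurred in the distance-$2$ case. One could also phrase the argument uniformly via $|N_{FQ_{n}}(V')|\geq |N_{FQ_{n}}(v_{i})|+|N_{FQ_{n}}(v_{j})|-|N_{FQ_{n}}(v_{i})\cap N_{FQ_{n}}(v_{j})|-|V'|$ and checking that the right-hand side is always at least $2n$.
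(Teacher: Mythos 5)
Your proof is correct and follows essentially the same route as the paper: both use the $(n+1)$-regularity of $FQ_{n}$ together with Lemma~\ref{2.4} to bound the number of common neighbours by $2$, yielding $|N_{FQ_{n}}(V')|\geq 2(n+1)-2=2n$. Your version is in fact slightly more careful than the paper's, since you explicitly track when the vertices of $V'$ themselves must be removed from the union of neighbourhoods (the adjacent case), a point the paper glosses over but which happens to be harmless because adjacency forces zero common neighbours.
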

\begin{proof} In a folded hypercube of dimension $n,$ each vertex has $n+1$ neighbours. Let $V'=\{u, v\}$, then from Lemma \ref{2.4} they have at most two common neighbours, hence $|N_{FQ_{n}}(V')|\geq 2(n+1)-2=2n.$
\end{proof}
\begin{lem} Assume $n\geq 5$ and $v_{i}, v_{j}, v_{k} \in V(FQ_{n})$ such that $d(v_{i},v_{j})= d(v_{i},v_{k})= d(v_{j},v_{k})= 2.$ Then the three vertices have exactly four vertices which are common neighbours of at least two of them.
\end{lem}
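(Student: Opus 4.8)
\noindent The plan is to reduce the statement to the single fact that three vertices of $FQ_n$ that are pairwise at distance $2$ have \emph{exactly one} common neighbour. Write $S_{ij},S_{ik},S_{jk}$ for the common-neighbour sets of the three pairs; by Lemma~\ref{2.4}(ii) each has exactly two elements. If a vertex belongs to two of these sets it is adjacent to all three of $v_i,v_j,v_k$, hence belongs to the third as well, so the vertices lying in at least two of the $S$'s are precisely the common neighbours of $v_i,v_j,v_k$. Letting $t$ be the number of such common neighbours, each $S$ consists of those $t$ shared vertices together with $2-t$ vertices private to that pair, and therefore
\[
\bigl|S_{ij}\cup S_{ik}\cup S_{jk}\bigr| \;=\; t+3(2-t)\;=\;6-2t .
\]
Thus it suffices to prove $t=1$. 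That $t\le 1$ is immediate: if $p,q$ were two common neighbours of all three vertices, then $v_i,v_j,v_k\in N(p)\cap N(q)$, contradicting the fact (Lemma~\ref{2.4}) that two vertices of $FQ_n$ have at most two common neighbours. So the whole content is to exhibit at least one common neighbour of $v_i,v_j,v_k$.

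For the existence I would use the vertex-transitivity of $FQ_n$ to assume $v_i$ is the all-zero vertex $0^n$. A vertex at distance $2$ from $0^n$ is obtained either by two hypercube edges or by one hypercube edge together with the complement edge, so $v_j$ and $v_k$ are each of one of two shapes: \emph{type A}, a weight-$2$ vertex $e_a+e_b$ (whose common neighbours with $0^n$ are $e_a$ and $e_b$), or \emph{type B}, the complement $\overline{e_a}$ of a weight-$1$ vertex (whose common neighbours with $0^n$ are $e_a$ and $\overline{0^n}$). One then runs through the three possibilities A--A, A--B, B--B for the pair $(v_j,v_k)$, each time imposing $d(v_j,v_k)=2$; since this forces $v_j+v_k$ to have Hamming weight $2$ or $n-1$, only a short list of configurations survives, and in each of them a common neighbour of $v_i,v_j,v_k$ can be named outright --- for instance $e_b$ when $v_j=e_a+e_b$, $v_k=e_b+e_c$; $e_a$ when $v_j=e_a+e_b$, $v_k=\overline{e_a}$; and $\overline{0^n}$ when $v_j=\overline{e_a}$, $v_k=\overline{e_b}$. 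Combining $t\ge 1$ with $t\le 1$ gives $t=1$, hence $|S_{ij}\cup S_{ik}\cup S_{jk}|=4$, as claimed.

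The step I expect to be the main obstacle is the bookkeeping in this last argument: one must enumerate \emph{exactly} the triples all of whose pairwise distances equal $2$, and the constraint $d(v_j,v_k)=2$ discards many a priori combinations (for instance, two type-A vertices with disjoint supports). A convenient way to keep this under control is to work in the Cayley-graph model with connection set $E^{+}=\{e_1,\dots,e_n,\overline{0^n}\}$: a common neighbour of a pair $u,v$ corresponds to a way of writing $u+v$ as a sum of two elements of $E^{+}$, and a common neighbour of $v_i,v_j,v_k$ corresponds to a ``triangle'' $\{a,b,c\}\subseteq E^{+}$ with $a+b=v_i+v_j$, $a+c=v_i+v_k$, $b+c=v_j+v_k$. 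Since these three differences sum to $0^n$ and each (being of Hamming weight $2$ or $n-1$) admits a unique such representation, finding the triangle amounts to checking that two of the three representing pairs overlap --- which is exactly the case analysis above in compact form.
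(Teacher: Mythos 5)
Your reduction is genuinely different from, and cleaner than, the paper's argument: the paper simply normalizes the triple by symmetry to $v_i=0\cdots0$ and two weight-two vertices sharing one coordinate and then lists the four vertices, whereas you prove the identity $|S_{ij}\cup S_{ik}\cup S_{jk}|=6-2t$ (with $t$ the number of common neighbours of all three vertices) and the bound $t\le 1$ from the fact that two vertices of $FQ_n$ have at most two common neighbours. Both of those steps are correct, and they isolate exactly what has to be checked: that $t\ge 1$, i.e. that every pairwise-distance-$2$ triple has a common neighbour.

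That is where the gap is. Your claim that the constraint $d(v_j,v_k)=2$ discards two type-A vertices with disjoint supports is false for $n=5$, and with it the existence step fails --- indeed the lemma itself is false for $n=5$. Take $v_i=00000$, $v_j=11000$, $v_k=00110$ in $FQ_5$: then $v_j+v_k=11110$ has weight $4=n-1$, i.e. equals $\overline{0^5}+e_5$, so $d(v_j,v_k)=2$ via the complementary edge. The three pairs of common neighbours are $\{10000,01000\}$, $\{00100,00010\}$ and $\{00111,11001\}$ --- six distinct vertices, so $t=0$ and the count is $6$, not $4$. (The paper's ``without loss of generality'' silently excludes this configuration, so its own proof has the same hole; the main theorem, which assumes $n\ge 8$, is not affected.) In your Cayley-graph formulation the point is that $t=|A\cap B|$, where $A,B\subseteq E^{+}$ are the $2$-sets representing $v_j,v_k$, and $A\cap B=\emptyset$ is compatible with $d(v_j,v_k)=2$ precisely when $A$, $B$ and the $2$-set representing $v_j+v_k$ partition the $(n+1)$-element set $E^{+}$, which forces $n+1=6$. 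For $n\ge 6$ a weight-$4$ or weight-$(n-3)$ difference is never a sum of two generators, so only your overlapping configurations A--A, A--B, B--B survive, your named common neighbours are correct, and the argument closes. You should therefore either restrict the statement to $n\ge 6$ or record the $n=5$ counterexample explicitly rather than assert that the disjoint case cannot occur.
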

\begin{proof} Let $v_{i}, v_{j}, v_{k} \in V$, such that $d(v_{i},v_{j})= d(v_{i},v_{k})= d(v_{j},v_{k})= 2.$ Let $b_{i}, b_{j}, b_{k}$ denote the corresponding address of these vertices. Because of the symmetric of folded hypercube, we can assume, without loss of generality, that $b_{i}=0\cdots 000\cdots 000\cdots 0, b_{j}=0\cdots 01_{j_1}0\cdots 01_{j_2}0\cdots 0$ and $b_{k}=0\cdots 1_{j_3}00\cdots 01_{j_2}0\cdots 0$, where $ 1_{j_l}$ means the $j_l-th$ binary bit be $1$, $1\leq j \leq n$. The neighbours to at least two of the vertices $v_{i}, v_{j}, v_{k}$ are the nodes with the following addresses: $b_{1}=0\cdots 1_{j_1}00\cdots 000\cdots 0$, $b_{2}=0\cdots 000\cdots 01_{j_2}0\cdots 0$, $b_{3}=0\cdots 01_{j_3}0\cdots 000\cdots 0$ and $b_{4}=0\cdots 1_{j_3}1_{j_1}0\cdots 01_{j_2}0\cdots 0$. This proves the result.
\end{proof}
\begin{lem}\cite{Xu} $FQ_{n}$ is a bipartite graph if and only if $n$ is odd.
\end{lem}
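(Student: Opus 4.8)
The plan is to exploit the standard weight bipartition of $Q_n$ and track what the extra perfect matching $M=\{(u,\overline{u}):u\in V(Q_n)\}$ does to it. First I would recall that $Q_n$ is bipartite with colour classes $V_0$ and $V_1$, where $V_i$ consists of those binary strings whose number of $1$'s is congruent to $i\pmod 2$; an ordinary hypercube edge flips exactly one coordinate and hence always joins $V_0$ to $V_1$. Since $E(FQ_n)=E(Q_n)\cup M$, the whole question reduces to understanding the matching edges $u\overline{u}$.

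For the ``if'' direction (assume $n$ odd): if $u$ has weight $w$, then $\overline{u}$ has weight $n-w$, and since $n$ is odd the integers $w$ and $n-w$ have opposite parity. Thus every matching edge $u\overline{u}$ also joins $V_0$ to $V_1$, so $(V_0,V_1)$ is a bipartition of all of $FQ_n$, and $FQ_n$ is bipartite.

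For the ``only if'' direction I would argue the contrapositive: if $n$ is even, then $FQ_n$ contains an odd cycle. Fix $u=00\cdots 0$ and flip the coordinates one at a time from left to right; this yields a path $u=x_0,x_1,\dots,x_n=\overline{u}$ in $Q_n$, where $x_t$ has weight exactly $t$, so the $x_t$ are pairwise distinct. Adding the matching edge $\overline{u}\,u\in M$ closes this path into a cycle of length $n+1$. As $n$ is even, $n+1$ is odd, so $FQ_n$ has an odd cycle and is not bipartite.

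The only point requiring a little care — and essentially the sole ``obstacle'' in an otherwise routine argument — is confirming that the closed walk $x_0,\dots,x_n,x_0$ really is a cycle: the $x_t$ are distinct since they have distinct weights, and $\overline{u}\neq u$, so no vertex repeats; its length is exactly $n+1$ because the path contributes $n$ hypercube edges and $M$ contributes one more. Everything else follows directly from the definitions of $Q_n$ and $FQ_n$.
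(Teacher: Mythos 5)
Your proof is correct. The paper does not prove this lemma at all --- it simply cites it from Xu and Ma's \emph{Cycles in folded hypercubes} --- so there is no internal argument to compare against; your argument (weight-parity bipartition of $Q_n$ preserved by complementary edges when $n$ is odd, and an explicit $(n+1)$-cycle through one complementary edge when $n$ is even) is the standard one and is sound, including the check that the $x_t$ are distinct because their weights are.
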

\begin{lem}\cite{Xu} If $FQ_{n}$ contains an odd cycle, then any shortest odd cycle contains exactly one complementary edge and the length is $n+1.$
\end{lem}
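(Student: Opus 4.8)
The plan is to encode closed walks in $FQ_n$ by their bit-flips. Identifying vertices with binary strings of length $n$, traversing a hypercube edge in coordinate $i$ adds the $i$-th unit vector $e_i$ (mod $2$), while traversing a complementary (matching) edge adds the all-ones vector $\mathbf 1$. Hence a closed walk that uses $a_i$ hypercube edges in coordinate $i$ ($i=1,\dots,n$) together with $k$ complementary edges returns to its starting vertex if and only if $\sum_{i=1}^{n}a_i e_i+k\,\mathbf 1\equiv\mathbf 0\pmod 2$; reading off coordinate $j$, this is equivalent to $a_j\equiv k\pmod 2$ for every $j$.

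First I would handle existence. Since $FQ_n$ is bipartite whenever $n$ is odd (Lemma 2.8), an odd cycle forces $n$ even, so assume $n$ even. The closed walk that takes a complementary edge $u\overline u$ and then returns from $\overline u$ to $u$ by flipping coordinates $1,2,\dots,n$ in turn visits $n+1$ vertices whose Hamming weights run through $0,n,n-1,\dots,1$; these weights are distinct, so the walk is a genuine cycle, of length $n+1$, and $n+1$ is odd. Thus a shortest odd cycle exists and has length at most $n+1$.

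Next comes the lower bound together with the structural claim, and both fall out of the parity relation applied to an \emph{arbitrary} odd cycle $C$ with parameters $a_1,\dots,a_n,k$. From $a_j\equiv k\pmod 2$ for all $j$ we get $\sum_j a_j\equiv nk\equiv 0\pmod 2$ (using $n$ even), so the length $|C|=\sum_j a_j+k$ has the parity of $k$; since $|C|$ is odd, $k$ is odd, hence $k\ge 1$ and every $a_j$ is odd, so $a_j\ge 1$. Therefore $|C|\ge n+k\ge n+1$, which with the construction above shows every shortest odd cycle has length exactly $n+1$. For such a cycle, $n+k\le n+1$ forces $k=1$, and $\sum_j a_j=n$ with each $a_j\ge 1$ forces $a_j=1$ for all $j$; so a shortest odd cycle is precisely one complementary edge followed by a shortest hypercube path, and in particular it contains exactly one complementary edge.

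I do not expect a real obstacle here: the heart of the matter is just this parity bookkeeping plus the elementary fact that $u$ and $\overline u$ are at hypercube distance $n$. The only points that deserve a moment of care are checking that the exhibited length-$(n+1)$ closed walk is a simple cycle, and being careful to run the parity argument for every odd cycle (not only for a shortest one), so that the ``exactly one complementary edge'' conclusion really does apply to each shortest odd cycle.
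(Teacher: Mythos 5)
Your argument is correct and complete. Note that the paper itself offers no proof of this statement; it is quoted verbatim from the cited reference [Xu] (Xu and Ma, \emph{Cycles in folded hypercubes}), so there is nothing internal to compare against. Your coordinate-counting proof is a clean, self-contained replacement: the key identity $a_j\equiv k\pmod 2$ for every coordinate $j$ of a closed walk, combined with $n$ even (forced by the bipartiteness criterion when an odd cycle exists), immediately yields $k$ odd and all $a_j\ge 1$, hence length $\ge n+1$; the explicit cycle through one complementary edge and one flip per coordinate (simple because the Hamming weights $0,n,n-1,\dots,1$ are pairwise distinct) gives the matching upper bound; and the equality analysis forces $k=1$ and $a_j=1$ for all $j$, which is exactly the ``one complementary edge'' claim. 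The two points you flag as needing care --- simplicity of the constructed walk, and running the parity argument over all odd cycles rather than only a shortest one --- are indeed the only delicate spots, and you handle both.
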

\begin{lem} Assume $n\geq 5$. If $V^{'}\subseteq V(FQ_{n})$ and $|V^{'}|=g,$ $1\leq g \leq n+1,$ then there exists at least one vertex $v\in V^{'}$ such that one of the following hold:

$(i)$ $v$ has a neighbour in $V^{'}$ and $|PN(v)|\geq n-g+2,$

$(ii)$ $v$ has no neighbours in $V^{'}$ and $|PN(v)|\geq n-g+1.$
 \end{lem}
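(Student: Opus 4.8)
The plan is a double-counting argument run by contradiction. For each $v\in V'$ write $a(v)=|N_{FQ_{n}}(v)\cap V'|$ for the number of neighbours of $v$ inside $V'$, let $s(v)$ be the number of neighbours of $v$ that lie outside $V'$ but are adjacent to some vertex of $V'\setminus\{v\}$, and put $p(v)=|PN(v)|$; since $FQ_{n}$ is $(n+1)$-regular and these three sets partition $N_{FQ_{n}}(v)$, we have $a(v)+s(v)+p(v)=n+1$. A preliminary observation is that for $n\ge 5$ the graph $FQ_{n}$ is triangle-free and $K_{2,3}$-free: a triangle would give two adjacent vertices with a common neighbour, contradicting Lemma \ref{2.4}$(i)$, while Lemma \ref{2.4}$(ii)$ rules out a pair with three common neighbours. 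Hence the induced subgraph $FQ_{n}[V']$ is triangle-free and $K_{2,3}$-free. Now assume, towards a contradiction, that no vertex of $V'$ satisfies $(i)$ or $(ii)$; then every $v\in V'$ with a neighbour in $V'$ has $p(v)\le n-g+1$, and every $v\in V'$ with no neighbour in $V'$ has $p(v)\le n-g$.

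Write $m=|E(FQ_{n}[V'])|$, let $g_{0}$ be the number of isolated vertices of $FQ_{n}[V']$ and $g_{1}=g-g_{0}$, and for $w\in N_{FQ_{n}}(V')$ set $d_{w}=|N_{FQ_{n}}(w)\cap V'|$. Summing the two bounds above over $V'$ gives $\sum_{v\in V'}p(v)\le g_{1}(n-g+1)+g_{0}(n-g)=g(n+1)-(g^{2}+g_{0})$. On the other hand a vertex $w\notin V'$ is counted in $s(v)$ for exactly $d_{w}$ vertices $v$ if $d_{w}\ge 2$ and for none otherwise, so $\sum_{v}a(v)=2m$ and $\sum_{v}s(v)=\sum_{d_{w}\ge 2}d_{w}$, whence
\[
\sum_{v\in V'}p(v)=g(n+1)-2m-\sum_{d_{w}\ge 2}d_{w}.
\]
Comparing the two expressions for $\sum_{v}p(v)$ forces $2m+\sum_{d_{w}\ge 2}d_{w}\ge g^{2}+g_{0}$, so it suffices to prove the opposite estimate $2m+\sum_{d_{w}\ge 2}d_{w}\le g^{2}+g_{0}-1$. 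The main tool for that is the identity
\[
\sum_{w\in N_{FQ_{n}}(V')}\binom{d_{w}}{2}=2P-\sum_{v\in V'}\binom{a(v)}{2},
\]
where $P$ is the number of pairs of vertices of $V'$ at distance exactly $2$; it follows by counting, over all pairs of vertices of $V'$, their common neighbours, which by Lemma \ref{2.4} number $2$ for a distance-$2$ pair and $0$ otherwise, the subtracted term recording (by triangle-freeness) the common neighbours that themselves lie in $V'$. Combined with the trivial bound $P\le\binom{g}{2}-m$, this controls $\sum_{w}\binom{d_{w}}{2}$.

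The hardest step is to deduce the required bound on $\sum_{d_{w}\ge 2}d_{w}$ from this control on $\sum_{w}\binom{d_{w}}{2}$: the naive convexity estimate is already too weak once $g\ge 3$, since it is saturated when all external vertices have $d_{w}=2$. The intended resolution is structural: a configuration with many external vertices of degree $2$ into $V'$ forces $V'$ to be close to a sunflower — essentially a subset of the closed neighbourhood of a single vertex — and for such sets the $d_{w}$ are concentrated (one large value, the rest equal to $2$), so that $\sum_{d_{w}\ge 2}d_{w}$ drops back to at most $g^{2}$; the lemma on three mutually distance-$2$ vertices stated above is exactly what bounds the overlaps among the common-neighbour pairs of $V'$ needed to make this rigorous, with the lemma on shortest odd cycles (length $n+1$) helping rule out degenerate short-cycle configurations. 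Concretely, the plan is to first settle the small cases $g\le 3$ by hand (there $FQ_{n}[V']$ is an independent set, a single edge possibly with an extra isolated vertex, or a path, and Lemma \ref{2.4} together with the triple and odd-cycle lemmas dispose of each), and then, for $4\le g\le n+1$, to combine the displayed identity with the triple lemma to obtain $2m+\sum_{d_{w}\ge 2}d_{w}\le g^{2}+g_{0}-1$ away from the extremal sunflower-type sets, while checking directly that every such extremal set already satisfies conclusion $(ii)$ — for instance if $V'\subseteq N_{FQ_{n}}(x)$ for a single vertex $x$, then every $v\in V'$ has $a(v)=0$ and exactly $n-g+1$ private neighbours. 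This contradicts $2m+\sum_{d_{w}\ge 2}d_{w}\ge g^{2}+g_{0}$, which is the required contradiction.
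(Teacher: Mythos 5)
Your counting framework is set up correctly --- the partition $a(v)+s(v)+p(v)=n+1$, the identity $\sum_{w\notin V'}\binom{d_w}{2}=2P-\sum_{v\in V'}\binom{a(v)}{2}$, and the reduction of the lemma to the single inequality $2m+\sum_{d_w\ge 2}d_w\le g^{2}+g_{0}-1$ are all sound. But that inequality is exactly where the proof stops being a proof. You yourself observe that the convexity estimate $\sum_{d_w\ge 2}d_w\le 2\sum_{w}\binom{d_w}{2}$ combined with $P\le\binom{g}{2}-m$ is too weak once $g\ge 3$, and the proposed repair --- that near-extremal configurations must be ``close to a sunflower,'' for which the conclusion is checked directly --- is announced as an ``intended resolution'' and a ``plan'' rather than carried out. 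Nothing in the write-up classifies the intermediate configurations (sets $V'$ with many, but not all, pairs at distance $2$, or with external vertices of degree $3$ or more into $V'$), and the three-mutually-distance-$2$ lemma you invoke only controls one triple at a time; turning it into a global bound on $\sum_{d_w\ge 2}d_w$ for an arbitrary $V'$ of size up to $n+1$ is precisely the combinatorial content of the statement, and it is missing. Note also that your target inequality is equivalent to an averaged form of the lemma (namely that $\sum_{v}p(v)$ strictly exceeds the sum of the per-vertex thresholds), which is formally stronger than the stated existence claim, so your route may force you to prove more than is needed or even available.

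The paper avoids this global accounting entirely by a local argument: it fixes a single pair $v_{1},v_{2}\in V'$ --- adjacent if possible, otherwise at distance $2$ if possible, otherwise all pairs of $V'$ are at distance greater than $2$ --- and shows that each further vertex $v_{j}\in V'$ can absorb at most two of the $2n$ (respectively $2(n-1)$) neighbours of $\{v_{1},v_{2}\}$ not shared between $v_{1}$ and $v_{2}$. The case analysis on the pair of distances $d(v_{1},v_{j}),d(v_{2},v_{j})$ is closed using the absence of $5$-cycles in $FQ_{n}$ for $n\ge 5$ (via the odd-cycle lemmas) together with the two- and three-vertex common-neighbour counts; this yields $|PN(v_{1})|+|PN(v_{2})|\ge 2(n-g+2)$ or $2(n-g+1)$, and pigeonhole on just these two vertices finishes. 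If you want to salvage your approach you must actually prove the structural dichotomy you describe; as written, the argument has a genuine gap at its central step.
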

\begin{proof}
We prove the result by considering the following two cases.

{Case 1.} There exist $v_{1}, v_{2}\in V^{'}$, such that $d(v_{1}, v_{2})=1.$

According to Lemma $2.4,$ these two vertices do not share any common neighbour. Hence, they have $2(n+1)-2= 2n$ neighbours that are not shared between them. From Lemma $2.7$ and Lemma $2.8,$ for $n\geq 5,$ $FQ_{n}$ has no odd cycle of length $5.$ Then each additional vertices $v_{j}\in V^{'} (j=3,4,\cdots\cdots g)$ can be in one of the following positions in relation to $v_{1}$ and $v_{2}.$

$(i)~ d(v_{1},v_{j})=1$ and $d(v_{2},v_{j})=2.$ Then by Lemma $2.4$, $v_{j}$ has no common neighbour with $v_{1}$ and has one additional common neighbour with $v_{2}$, the other common neighbour is $v_{1}$.

$(ii)~ d(v_{1},v_{j})=2$ and $d(v_{2},v_{j})> 2.$ Then by Lemma $2.4$, $v_{j}$ has two common neighbours with $v_{1}$ and no common neighbour with $v_{2}.$

$(iii)~ d(v_{1},v_{j})>2$ and $d(v_{2},v_{j})> 2.$ Then by Lemma $2.4$, $v_{j}$ has no common neighbours with either $v_{1}$ or $v_{2}.$

By Lemma $2.7$ and Lemma $2.8$ for $n\geq 5,$ we need not to consider the case $d(v_{1},v_{j})=2$ and $d(v_{2},v_{j})=2.$

Thus, for every $v_{j}\in V^{'}, j=3,4,\cdots g$ there are at most two vertices that are common neighbours to $v_{j}$ and either with $v_{1}$ or $v_{2}$. This implies that $|PN(v_{1})|+|PN(v_{2})|\geq 2n-2(g-2)= 2(n-g+2).$ Hence, either $|PN(v_{1})|\geq n-g+2 $ or $|PN(v_{2})|\geq n-g+2 $ or both of them hold.

{Case 2.} There exist no $v_{i}, v_{j} \in V^{'}$ such that $d(v_{i}, v_{j})=1$. The result can be in the following cases.

{Subcase 2.1}. There exist $v_{1}, v_{2} \in V^{'}$ such that $d(v_{1},v_{2})=2.$

 By Lemma $2.5$, the two vertices have two common neighbours. Hence, they have $2(n+1)-2\times 2 = 2(n-1)$ neighbours that are not shared between them. Each additional vertices $v_{j}\in V^{'} (j=3,4,\cdots\cdots g)$ can be in one of the following positions in relation to $v_{1}$ and $v_{2}.$

$(i)~ d(v_{1},v_{j})= d(v_{2},v_{j})=2$, then $d(v_{1},v_{2})= d(v_{1},v_{j})= d(v_{2},v_{j})= 2.$ By Lemma $2.6$, there are only four vertices which are common neighbours to at least two of them. As $v_{1}$ and $v_{2}$ have two common neighbours, so $v_{j}$ has at most two common neighbours with either $v_{1}$ or $v_{2}$.

$(ii)~ d(v_{1},v_{j})= 2$ and $d(v_{2},v_{j})> 2$, then by Lemma $2.4$, $v_{j}$ has two common neighbours with $v_{1}$ and no common neighbour with $v_{2}.$

$(iii)~ d(v_{1},v_{j})>2$ and $d(v_{2},v_{j})> 2,$ then by Lemma $2.4$, $v_{j}$ has no common neighbours with either $v_{1}$ or $v_{2}.$

From Lemma $2.7$ and Lemma $2.8$ for $n\geq 5,$ we need not to consider the case $d(v_{1},v_{j})=1$ and $d(v_{2},v_{j})=2.$

Thus, for every $v_{j}\in V^{'}, j=3,4,\cdots g$ there are at most two vertices that are common neighbours to $v_{j}$ and either with $v_{1}$ or $v_{2}$. This implies that $|PN(v_{1})|+|PN(v_{2})|\geq 2(n-1)-2(g-2)= 2(n-g+1).$ Hence, either $|PN(v_{1})|\geq n-g+1 $ or $|PN(v_{2})|\geq n-g+1$ or both of them hold.

{Subcase 2.2}. For each $v_{i}, v_{j} \in V^{'} (i\neq j)$ and $d(v_{i},v_{j})> 2.$

Choose every pair of vertices in $V^{'}$, they have no common neighbour. Hence, $ |PN(v_{1})|= |PN(v_{2})|=n+1 > n-g+1 .$

So the result holds. Because of the symmetric of the folded hypercube, it is possible to change the role of $v_{1}$ and $v_{2}$ in the following discussions.
\end{proof}

For convenience, let $g(n+1)-\frac{1}{2}g(g+1)+1=f_{n}(g)$. Combing Lemma $2.5$, Lemma $2.6$ and Lemma $2.7$, we can determine the minimum neighbour of $FQ_{n}$.

\begin{thm} Let $FQ_{n}$ be a folded hypercube of dimension $n,$ $n\geq 5$, and $V^{'}$ be any vertex subset of $V(FQ_{n})$ with $|V^{'}|=g,$ $1\leq g \leq n+2.$ Then $\theta _{FQ_{n}}(g)= f_{n}(g).$ Moreover, let $V^{'}$ be a vertex subset in $FQ_{n}$ that consists of a vertex and its $g-1$ adjacent vertices, then $|N_{FQ_{n}}(V^{'})|= f_{n}(g)$ for $1\leq g \leq n+2.$
\end{thm}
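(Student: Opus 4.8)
The plan is to prove the two inequalities $\theta_{FQ_n}(g)\le f_n(g)$ and $\theta_{FQ_n}(g)\ge f_n(g)$ separately, the first by exhibiting the extremal configuration and the second by an induction on $g$ driven by the private-neighbour lemma (Lemma 2.9). For the upper bound, let $V'$ consist of a fixed vertex $u$ together with $g-1$ of its $n+1$ neighbours. Since any two vertices of $FQ_n$ share at most two common neighbours (Lemma 2.5), and the neighbours of $u$ are pairwise at distance $2$ in $FQ_n$ (for $n\ge 5$, $FQ_n$ has no $5$-cycle, so two neighbours of $u$ can only meet at $u$ and at most one other vertex), a direct inclusion–exclusion count over $C_{FQ_n}(V')=V'\cup N_{FQ_n}(V')$ yields exactly $f_n(g)$ outside neighbours; I would present this count, being careful that the complementary edge is already accounted for and does not create an extra shared neighbour. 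This simultaneously proves the ``Moreover'' clause and shows $\theta_{FQ_n}(g)\le f_n(g)$.

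For the lower bound I would argue by induction on $g$, with base case $g=1$ trivial ($\theta_{FQ_n}(1)=n+1=f_n(1)$) and $g=2$ handled by Lemma 2.6 (which gives $|N_{FQ_n}(V')|\ge 2n=f_n(2)$). For the inductive step, given $|V'|=g$ with $3\le g\le n+2$, apply Lemma 2.9 to obtain a vertex $v\in V'$ with a large private-neighbour set: if $v$ has a neighbour in $V'$ then $|PN(v)|\ge n-g+2$, and if not then $|PN(v)|\ge n-g+1$. Set $V''=V'\setminus\{v\}$, so $|V''|=g-1$. Then
\[
|N_{FQ_n}(V')|\ \ge\ |N_{FQ_n}(V'')|\ -\ \mathbf{1}[v\in N_{FQ_n}(V'')]\ +\ |PN(v)|,
\]
because deleting $v$ from $V'$ can only lose $v$ itself from the neighbourhood (when $v$ was adjacent to $V''$, i.e. had a neighbour in $V'$) while the private neighbours of $v$ are gained. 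By induction $|N_{FQ_n}(V'')|\ge f_n(g-1)$, and one checks $f_n(g-1)-1+(n-g+2)=f_n(g)$ in the ``has a neighbour'' case and $f_n(g-1)+(n-g+1)=f_n(g)$ in the ``no neighbour'' case, since $f_n(g)-f_n(g-1)=n-g+1$. This closes the induction.

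The main obstacle is making the private-neighbour bookkeeping in the inductive step airtight: I must verify that the only neighbour of $V'$ that can fail to be a neighbour of $V''$ is $v$ itself (this is immediate), and, conversely, that every vertex of $PN(v)$ is genuinely new, i.e. not already in $N_{FQ_n}(V'')$ — but that is exactly the definition of $PN(v)=N(v)-N(V'-\{v\})-V'$, so the two contributions $|N_{FQ_n}(V'')|$ and $|PN(v)|$ are counted over disjoint vertex sets and the displayed inequality is valid. A secondary point to watch is the range of $g$: Lemma 2.9 is stated for $1\le g\le n+1$, so the top case $g=n+2$ of the theorem needs either a small separate argument (the extremal set is again a vertex plus $n+1=$ all its neighbours, giving $f_n(n+2)$ directly) or an extension of Lemma 2.9's counting to $g=n+2$; I would handle $g=n+2$ by the explicit extremal computation together with the observation that $\theta$ is attained there by the closed neighbourhood of a single vertex. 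Assembling the two bounds gives $\theta_{FQ_n}(g)=f_n(g)$ for $1\le g\le n+2$.
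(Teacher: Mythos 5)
Your proposal follows essentially the same route as the paper: the upper bound and the ``Moreover'' clause come from the inclusion--exclusion count on a vertex together with $g-1$ of its neighbours, and the lower bound is an induction on $g$ driven by the private-neighbour lemma. Your direct recurrence $|N_{FQ_n}(V')|\ge |N_{FQ_n}(V'')|-\mathbf{1}[v\in N_{FQ_n}(V'')]+|PN(v)|$ is just the contrapositive form of the paper's contradiction argument; the arithmetic $f_n(g)-f_n(g-1)=n-g+1$ is correct and the two cases match the two alternatives of that lemma, so the argument for $1\le g\le n+1$ is sound.

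The one genuine gap is the case $g=n+2$. You correctly notice that the private-neighbour lemma is only available for $g\le n+1$, but the fix you commit to --- ``the explicit extremal computation together with the observation that $\theta$ is attained there by the closed neighbourhood of a single vertex'' --- only yields the upper bound $\theta_{FQ_n}(n+2)\le f_n(n+2)$. The ``observation'' that the minimum is attained by that configuration is precisely the lower bound you still owe, so as written this step is circular. The repair is a one-line monotonicity argument, and it is what the paper does: for an arbitrary $V'$ with $|V'|=n+2$, pick any $v\in V'$ and set $V''=V'\setminus\{v\}$; then $N_{FQ_n}(V')\supseteq N_{FQ_n}(V'')\setminus\{v\}$, hence $|N_{FQ_n}(V')|\ge |N_{FQ_n}(V'')|-1\ge f_n(n+1)-1=f_n(n+2)$, using the already-established case $g=n+1$ and the identity $f_n(n+2)=f_n(n+1)-1$. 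With that substitution your proof is complete and coincides with the paper's.
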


\begin{proof} We prove this result by induction on $g$.

First, we prove the result for $g=1$ and $g=2$. As every vertex in $FQ_{n}$ has $n+1$ neighbours, so $|N_{FQ_{n}}(V^{'})|\geq n+1$ holds for $|V^{'}|=1$. By Lemma $2.5,$ the result holds for $g=2.$

Now assume the result is true for $g,$ $g\leq n,$ that is, for any vertex subset $V^{'}$ of $V(FQ_{n})$ with $|V^{'}|=g, |N_{FQ_{n}}(V^{'})|\geq f_{n}(g).$  Then we show that it is also true for $g+1.$ We prove the result by contradiction. Let $V^{'}$ be a vertex subset of $V(FQ_{n})$ with $|V^{'}|=g+1$ such that $|N_{FQ_{n}}(V^{'})|< f_{n}(g+1).$ By Lemma $2.9$, there exists at least one vertex $v\in V^{'}$ such that one of the following hold:

{Case 1.} $v$ has a neighbour in $V^{'}$ and $|PN(v)|\geq n-(g+1)+2.$

Let $V^{''}=V^{'}-\{v\}.$ Then $N_{FQ_{n}}(V^{''})= (N_{FQ_{n}}(V^{'})- PN(v))\bigcup \{v\},$ and therefore,
$|N_{FQ_{n}}(V^{''})|= |N_{FQ_{n}}(V^{'})|- |PN(v)|+1 < f_{n}(g+1)-[n-(g+1)+2]+1= f_{n}(g). $ A contradiction.

{Case 2.} $v$ has no neighbour in $V^{'}$ and $|PN(v)|\geq n-(g+1)+1.$

Let $V^{''}=V^{'}-\{v\}.$ Then $N_{FQ_{n}}(V^{''})= (N_{FQ_{n}}(V^{'})- PN(v))$ and therefore,
$|N_{FQ_{n}}(V^{''})|= |N_{FQ_{n}}(V^{'})|- |PN(v)| < f_{n}(g+1)-[n-(g+1)+1] = f_{n}(g). $ A contradiction.

So $|N_{FQ_{n}}(V^{'})|\geq f_{n}(g),$  where $V^{'}$ be any vertex subset of $V(FQ_{n})$ with $|V^{'}|=g$ and $1\leq g\leq n+1.$

Following, we show the result is true for $g=n+2.$
Let $V^{'}$ be any vertex subset of $V(FQ_{n})$ with $|V^{'}|=n+2$. Choose a node $v\in V^{'}$ and let $V^{''}=V^{'}-\{v\}$, then $N_{FQ_{n}}(V^{'})\supseteq N_{FQ_{n}}(V^{'}-\{v\})-\{v\}$, that is $|N_{FQ_{n}}(V^{'})|\geq |N_{FQ_{n}}(V^{''})|-1 \geq f_{n}(n+1)-1=f_{n}(n+2).$

So combining all the cases, $|N_{FQ_{n}}(V^{'})|\geq f_{n}(g).$

Let $V^{'}$ be a vertex subset in $FQ_{n}$ that consists of a vertex and its $g-1$ adjacent vertices, by Lemma $2.3$, $|N_{FQ_{n}}(V^{'})|=g(n+1)-2(g-1)-\frac{1}{2}(g-1)(g-2)=f_{n}(g).$
\end{proof}

By simple calculation, $f_{n}(g)=g(n+1)-\frac{1}{2}g(g+1)+1$ is strictly monotonically increasing when $1\leq g \leq n$. Moreover, the maximum of $f_{n}(g)$ is $f_{n}(n)=f_{n}(n+1)= \frac{1}{2}n(n+1)+1$, and $f_{n}(n)=f_{n}(n+1)> f_{n}(n-1)=f_{n}(n+2) >f_{n}(g)$ for $1\leq g \leq n-2.$ $(*)$

\begin{thm}
 Let $ n\geq 8$ and $1\leq g\leq n-1.$ Then $c\kappa_{g+1}(FQ_{n})=f_{n}(g).$
\end{thm}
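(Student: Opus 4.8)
The plan is to prove the two inequalities $c\kappa_{g+1}(FQ_n)\le f_n(g)$ and $c\kappa_{g+1}(FQ_n)\ge f_n(g)$ separately. For the upper bound I would exhibit an explicit $(g+1)$-component cut of size $f_n(g)$: take $V'$ to consist of a vertex $u$ together with $g-1$ of its neighbours, and set $F=N_{FQ_n}(V')$. By Theorem 2.10 we have $|F|=f_n(g)$, and in $FQ_n-F$ the set $V'$ splits into $g$ isolated vertices (each vertex of $V'$ has all its neighbours outside $V'$ lying in $F$, since $n\ge 8$ forces $g-1\le n-2<n+1=\deg$, so no two vertices of $V'$ can be non-adjacent within an induced star and still the neighbourhood is exactly $F$); together with the remaining large component this yields at least $g+1$ components. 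A small check is needed that $FQ_n-F$ is non-empty beyond $V'$, which follows from $2^n - f_n(g) - g > 0$ for the stated range.

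For the lower bound, suppose $F\subseteq V(FQ_n)$ with $|F|<f_n(g)$; I must show $FQ_n-F$ has at most $g$ components. Write $FQ_n=D_0\otimes D_1$ as in the introduction, so each $D_i\cong Q_{n-1}$, and let $F_i=F\cap V(D_i)$. The strategy is the standard one for hypercube-like graphs: use Lemma 2.3 applied to $Q_{n-1}$ to control the component structure of each $D_i-F_i$, then glue the pieces across the matching between $D_0$ and $D_1$ and the complementary edges. Concretely, since $|F|<f_n(g)$ and (by $(*)$) $f_n(g)\le f_n(n)=\tfrac12 n(n+1)+1$, at least one $F_i$, say $F_0$, satisfies $|F_0|<\theta_{Q_{n-1}}(g)$ — here one uses the numeric relation between $f_n$ and the $\theta_{Q_{n-1}}$ values from Lemma 2.1, together with $c\kappa_{g+1}(Q_{n-1})$ from Lemma 2.2; this is where the hypothesis $n\ge 8$ is used to make the arithmetic bounds on $g\le n-1$ fit inside the ranges where Lemmas 2.1–2.3 apply. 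Then Lemma 2.3 gives a unique large component $L_0$ in $D_0-F_0$, and every vertex of $D_0-F_0$ outside $L_0$ (at most $g-1$ of them, or at most $n$ when $g$ is near $n$) plus the vertices of $D_1-F_1$ that fail to connect to $L_0$ must be "charged" against $F$.

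The crux is a counting argument: each small component $C$ of $FQ_n-F$ that is disjoint from the big component must have $N_{FQ_n}(C)\subseteq F$, hence $|N_{FQ_n}(C)|\le |F|-|C|< f_n(g)$; but by Theorem 2.10, $|N_{FQ_n}(C)|\ge \theta_{FQ_n}(|C|)=f_n(|C|)$, and since $f_n$ is increasing on $1\le g\le n$ (the statement $(*)$), $|C|$ is forced to be small, and in fact the total number of vertices lying in small components is bounded. Summing $\theta_{FQ_n}$ over the small components and comparing with $|F|<f_n(g)$ using the superadditivity/monotonicity of $f_n$ shows that there can be at most $g-1$ small components besides the large one, i.e. at most $g$ components in all; whenever one tries to have $g$ small components the total neighbour demand exceeds $f_n(g)$.

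The main obstacle I anticipate is the gluing/bookkeeping step: making precise that, after splitting as $D_0\otimes D_1$, the large components $L_0\subseteq D_0-F_0$ and $L_1\subseteq D_1-F_1$ (when the latter exists) actually merge into one component of $FQ_n-F$ through the perfect matching $M$ plus the complementary edges, and simultaneously tracking exactly how many isolated-ish vertices can survive. This requires a careful case split on whether $|F_1|<\theta_{Q_{n-1}}(\text{something})$ or $|F_1|$ is large (in which case $D_1-F_1$ could itself be very fragmented, but then $|F_0|$ is tiny and almost all of $D_1-F_1$ attaches to $L_0$ across the $2^{n-1}$ matching edges minus $|F|$ of them — a quantitative estimate $2^{n-1}-|F|>(\text{number of small-component vertices})$ does the job for $n\ge 8$). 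Assembling these estimates into the clean bound "at most $g$ components" is the technical heart of the argument.
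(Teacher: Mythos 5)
Your upper-bound construction does not produce $g+1$ components. If $V'$ consists of a vertex $u$ together with $g-1$ of its neighbours and $F=N_{FQ_n}(V')$, then $F$ by definition excludes $V'$ itself, so the $g-1$ edges from $u$ to the other vertices of $V'$ survive in $FQ_n-F$ and $V'$ remains a connected star: you get only two components, not $g+1$ (your parenthetical claim that "$V'$ splits into $g$ isolated vertices" is exactly what fails). The set that works is $S=\{v_1,\dots,v_g\}$ consisting of $g$ neighbours of a fixed vertex $v$: since $FQ_n$ has no triangles for $n\ge 5$, these are pairwise at distance $2$, $v\in N_{FQ_n}(S)$, so deleting $N_{FQ_n}(S)$ isolates each $v_i$; and a direct count (each pair $v_i,v_j$ has exactly the two common neighbours $v$ and one other, these "others" being distinct over distinct pairs) gives $|N_{FQ_n}(S)|=g(n+1)-(g-1)-\frac{1}{2}g(g-1)=f_n(g)$. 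Coincidentally this equals the neighbourhood size of your star, but the star is the wrong set to isolate.

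The lower bound is where your outline leaves the essential step unproved. The decomposition $D_0\otimes D_1$ with gluing across the matching is precisely what you flag as the unresolved "technical heart," and the counting you sketch does not close on its own: bounding each small component separately and "summing $\theta_{FQ_n}$" is not valid, since $\theta$ of a union is not the sum of the $\theta$'s, and a priori a single small component could have more than $n+2$ vertices, outside the range where Theorem 2.10 says anything. The paper's route avoids the decomposition entirely: $Q_n$ itself is a spanning subgraph of $FQ_n$, so one applies Lemma 2.3(iii) to $Q_n$ in dimension $n$ with the same fault set $F$ and parameter $n+3$. Since $|F|\le f_n(g)-1\le f_n(n-1)-1=\frac{1}{2}n(n+1)-1<\theta_{Q_n}(n+3)=\frac{1}{2}n(n+3)-7$ for $n\ge 7$, the graph $Q_n-F$ (hence a fortiori $FQ_n-F$) has a unique large component missing at most $n+2$ vertices. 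Therefore the union $U$ of all components of $FQ_n-F$ other than the largest satisfies $g\le |U|\le n+2$, which is exactly the range covered by Theorem 2.10, and $N_{FQ_n}(U)\subseteq F$ gives $|F|\ge f_n(|U|)\ge f_n(g)$ by the monotonicity statement $(*)$, a contradiction. Without some such a priori bound on $|U|$, your argument cannot conclude that there are at most $g$ components.
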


\begin{proof} Let $F$ be a minimum $(g+1)$-component cut of $FQ_{n}$, then $|F|=c\kappa_{g+1}(FQ_{n})$ and $FQ_{n}-F$ induces at least $g+1$ components. First we prove $|F|\leq f_{n}(g)$ for $1\leq g\leq n+1$.
Let $v\in V(FQ_{n}),$ $S=\{v_{1}, v_{2}, \cdots, v_{g}| v_{i}\in N(v), 1\leq i \leq g\}$. Then $N_{FQ_{n}}(S)$ is a $(g+1)$-component cut of $FQ_{n}$ and at least $g$ of them are singletons. So $c\kappa_{g+1}(FQ_{n})\leq f_{n}(g)$ for $1\leq g \leq n+1.$

Next, we prove that $c\kappa_{g+1}(FQ_{n})\geq f_{n}(g)$ for $1\leq g \leq n+1.$
Suppose the contrary, that is, $c\kappa_{g+1}(FQ_{n})\leq f_{n}(g)-1,$ then $|F|\leq f_{n}(g)-1.$
 By $*,$ we have $|F|\leq \leq f_{n}(g)-1 \leq f_{n}(n-1)-1 = n(n+1)/2-1< \theta_{Q_{n}}(n+3)=n(n+3)/2-7$ for $n\geq 7.$ By Lemma $2.2,$ $Q_{n}-F$ contains exactly one large component $C$ of order at least $2^{n}-|F|-(n+2)$. Noting that $Q_{n}$ is a spanning subgraph of $FQ_{n},$ so $C$ is a connected subgraph in $FQ_{n}$. Let the components of $FQ_{n}-F$ be $C_{1}, C_{2}, C_{3}, \cdots , C_{l} (l\geq g+1),$ and assume that the largest component of $FQ_{n}-F$ is $C_{1}$. Then $g\leq|\bigcup_{i=2}^{l}V(C_{i})|\leq n+2.$ As we have $N_{FQ_{n}}(\bigcup_{i=2}^{l}V(C_{i}))\subseteq F,$ by Theorem $2.10,$ we have $|F|\geq |N_{FQ_{n}}(\bigcup_{i=2}^{l}V(C_{i}))|\geq f_{n}(g)$, a contradiction.

 {\bf Case 2.} $g=n, n+1.$

By $(*),$ we have $f_{n}(n)=f_{n}(n+1)= n(n+1)/2+1.$ If we have $c\kappa_{n+1}(FQ_{n})\geq f_{n}(n)$, then from $f_{n}(n+1)\geq c\kappa_{n+2}(FQ_{n}) \geq c\kappa_{n+1}(FQ_{n}) \geq f_{n}(n),$ we have $c\kappa_{n+2}(FQ_{n})= f_{n}(n+1).$ So we just need to show that $c\kappa_{n+1}(FQ_{n}) \geq f_{n}(n)$. Suppose to the contrary, that is, $c\kappa_{n+1}(FQ_{n})\leq f_{n}(n)-1.$ Then $|F|\leq f_{n}(n)-1=n(n+1)/2 < \theta_{Q_{n}}(n+3)=n(n+3)/2-7$ for $n\geq 8.$ So with the same discuss as to $1\leq g \leq n-1,$ we have $|F|\geq f_{n}(n).$

So $c\kappa_{g+1}(FQ_{n})\geq f_{n}(g)$ for $1\leq g \leq n+1.$

\end{proof}

One may wonder why we only do the result for $1\leq g \leq n+1$ and not extend the result for $g=n+2.$ The reason is that the formula does not hold for $g=n+2.$ And as proved in the following result.

\begin{lem} Let $n\geq 5,$ then $c\kappa_{n+3}(FQ_{n}) > f_{n}(n+2).$

\end{lem}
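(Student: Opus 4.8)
The plan is to argue by contradiction: from a hypothetical $(n+3)$-cut of size at most $f_n(n+2)$ I would extract an independent set of $n+2$ isolated vertices and then contradict the earlier neighbourhood bound for $FQ_n$. The arithmetic driving this is that $f_n(n+2)=\tfrac12 n(n+1)$ while $(*)$ gives $f_n(n+1)=\tfrac12 n(n+1)+1$, so $f_n(n+1)=f_n(n+2)+1$. In fact the quickest finish is to combine the monotonicity $c\kappa_{g+1}\ge c\kappa_g$ recorded in the introduction with the value $c\kappa_{n+2}(FQ_n)=f_n(n+1)$ obtained in the previous theorem (its case $g=n+1$): this gives at once $c\kappa_{n+3}(FQ_n)\ge c\kappa_{n+2}(FQ_n)=f_n(n+1)=f_n(n+2)+1>f_n(n+2)$. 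I would nonetheless also give the direct structural proof below, since it explains why the formula for $c\kappa_{g+1}(FQ_n)$ must break at $g=n+2$.

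Suppose $c\kappa_{n+3}(FQ_n)\le f_n(n+2)$ and let $F$ be a minimum $(n+3)$-component cut, so $|F|\le f_n(n+2)=\tfrac12 n(n+1)$. Since $\tfrac12 n(n+1)<\tfrac12 n(n+3)-7=\theta_{Q_n}(n+3)$ (the gap is $n-7$), part $(iii)$ of Lemma 2.3 applies with $g=n+3$ and shows that $Q_n-F$ has a unique large component $C$ with $|C|\ge 2^n-|F|-(n+2)$. As $Q_n$ is a spanning subgraph of $FQ_n$, the set $V(C)$ is connected in $FQ_n$, hence lies inside a single component of $FQ_n-F$. Consequently all components of $FQ_n-F$ disjoint from $V(C)$ together span at most $(2^n-|F|)-|C|\le n+2$ vertices; there are at least $n+2$ of them (since $FQ_n-F$ has at least $n+3$ components and exactly one meets $V(C)$), so there are exactly $n+2$ of them and each is a single vertex. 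Their union $S$ is therefore an independent set of $FQ_n$ with $|S|=n+2$ and $N_{FQ_n}(S)\subseteq F$.

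The key step: independence of $S$ means no vertex of $S$ lies in $N_{FQ_n}(S\setminus\{v\})$, so for any $v\in S$ we get $N_{FQ_n}(S)\supseteq N_{FQ_n}(S\setminus\{v\})$. Applying the earlier theorem $\theta_{FQ_n}(n+1)=f_n(n+1)$ to the $(n+1)$-set $S\setminus\{v\}$ yields $|F|\ge|N_{FQ_n}(S)|\ge|N_{FQ_n}(S\setminus\{v\})|\ge f_n(n+1)=f_n(n+2)+1$, contradicting $|F|\le f_n(n+2)$. Hence $c\kappa_{n+3}(FQ_n)>f_n(n+2)$. The intuition behind the jump is that the $(n+2)$-sets attaining $\theta_{FQ_n}(n+2)=f_n(n+2)$ are a vertex together with its $n+1$ neighbours, which are as far from independent as possible and so cannot be realized as $n+2$ isolated vertices left over after a cut.

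The step I expect to be the genuine obstacle is the inequality $|F|<\theta_{Q_n}(n+3)$ that licenses the use of Lemma 2.3: with $|F|\le f_n(n+2)=\tfrac12 n(n+1)$ this amounts to $\tfrac12 n(n+1)<\tfrac12 n(n+3)-7$, i.e. $n\ge 8$, so the structural argument above is clean only for $n\ge 8$. For $n\in\{5,6,7\}$ I would instead use $c\kappa_{n+3}(FQ_n)\ge c\kappa_{n+3}(Q_n)$ together with the hypercube component-connectivity values of Lemma 2.2 — for $n=7$, for example, this already gives $c\kappa_{10}(Q_7)=29>28=f_7(9)$ — and settle the last one or two small cases by a direct component analysis of $FQ_n-F$ using the same independence idea.
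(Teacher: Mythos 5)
Your ``quickest finish'' is exactly the paper's proof: it deduces the claim from the monotonicity $c\kappa_{g+1}(G)\geq c\kappa_{g}(G)$ together with the value $c\kappa_{n+2}(FQ_{n})=f_{n}(n+1)=f_{n}(n+2)+1$ supplied by the preceding theorem, so the proposal is correct and takes essentially the same approach. Your supplementary structural argument (forcing $n+2$ isolated vertices and applying $\theta_{FQ_{n}}(n+1)=f_{n}(n+1)$ to an $(n+1)$-subset of them) is a sound independent verification for $n\geq 8$, and you are right that every available route really only covers $n\geq 8$ even though the lemma is stated for $n\geq 5$ --- a restriction the paper's own one-line proof silently inherits from Theorem 2.11.
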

\begin{proof} Suppose to the contrary, that is, $c\kappa_{n+3}(FQ_{n}) \leq f_{n}(n+2).$ Since $f_{n}(n+2)< f_{n}(n+1) \leq c\kappa_{n+1}(FQ_{n})$, this implies that $c\kappa_{n+3}(FQ_{n})< c\kappa_{n+1}(FQ_{n}).$ A contradiction.
\end{proof}

\section{Conclusions}
The component connectivity is a generalization of standard connectivity of graphs, see \cite{Chartrand,Sampathkumar}, which can be viewed as a measure of robustness of interconnection networks. The standard connectivity of hypercubes (or classic networks) have been studied by many authors, but there are few papers on the component connectivity of networks. As the folded hypercube is an enhancement on the hypercube $Q_{n}$, they have some similar properties. Motivated by the method  in \cite{Hsu}. We introduce an idea to consider the  $g$-component connectivity of the folded hypercube for $ 2\leq g\leq n+2$ for $n\geq 8$. Our result in this note is not complete.
 The problem of determining the $(g+1)$-component connectivity of the folded hypercube for $g\geq n+2$ is still open.

\section{Acknowledgements}

The research is supported by NSFC (No.11301371,  61502330), SRF for ROCS, SEM and Natural Sciences
Foundation of Shanxi Province (No. 2014021010-2), Fund Program for the Scientific Activities of Selected
Returned Overseas Professionals in Shanxi Province.

\begin{rem}
The work was included in the MS thesis of the first author in [On the component connectiviy of hypercubes and folded hypercubes, MS Thesis at Taiyuan University of
Technology, 2017]. 
\end{rem}


\begin{thebibliography}{10}

\bibitem{Bondy} J.A. Boundy and U.S.R. Murty, Graph theory with application, Macmillan, 1976.
\bibitem{Chartrand} G. Chartrand, S.F. Kapoor, L. Lesniak, and D.R. Lick, Generalized connectivity in graphs, Bull. Bombay Math. Colloq 2 (1984) 1-6.
\bibitem{Chang} N.W. Chang and S.Y. Hsieh, Extroconnectivities of hypercube-like networks, Journal of Computer and system sciences 79 (2013)  669-688.
\bibitem{El-Amawy} A. El-Amawy and S. Latifi, Properties and performance of folded hypercubes, IEEE Transactions on Parallel and Distributed Systems 2 (1991) 31-42.
\bibitem{Hsu} L.-H. Hsu, E. Cheng, L. Lipt¨¢k, J.M. Tan, C-K Lin
and T-Y Ho, Component connectivity of the hypercubes, International Journal of Computer Mathematics 89 (2012)  137-145.
\bibitem{Lai} C.N. Lai, G.H. Chen and D.R. Duh, Constructing one-to-many disjoint paths in folded hypercubes, IEEE Transactions on Computers 51 (2002) 33-45.
\bibitem{Sampathkumar} E. Sampathkumar, Connectivity of a graph-A generalization, J.Combin. Inform. System Sci 9 (1984) 71-78.
\bibitem{Somani} A.K. Somani and O. Peleg, On Diagnosability of Large Fault Sets in Regular Topology-Based Computer Systems, IEEE Transactions on Computers 45 (1996) 892-903.
\bibitem{Xu} J.M Xu and M. Ma, Cycles in folded hypercubes, Applied Mathematics Letters 19 (2006) 140-145.
\bibitem{Yang2} X. Yang, J.Cao, J.Megson and J.Luo, Minimum neighborhood in a generalized cube, Information Processing Letters 97 (2006) 88-93.
\bibitem{Yang3} W. Yang and H. Lin, Reliability Evaluation of BC Networks in Terms of the Extra Vertex- and Edge-Connectivity, IEEE Transactions on Computers 63 (2014) 2540-2548.
\bibitem{Zhu} Q. Zhu, J. Xu, X. Hou and M. Xu, On reliability of the folded hypercubes, Information Sciences 177 (2007) 1782-1788.
\bibitem{zhao} S.Zhao, W. Yang and S. Zhang, Component connectivity of the hypercubes, Theoretical Computer Science 640 (2016) 115-118.


\end{thebibliography}
\end{document}